\theoremstyle{plain}
\newtheorem{thm}{Theorem}
\newtheorem{lem}{Lemma}
\newtheorem{prop}{Proposition}
\newtheorem{cor}{Corollary}
\theoremstyle{definition}
\newtheorem{defn}{Definition}
\newtheorem{pr}{Problem}
\begin{document}

\begin{center}\Large
\textbf{Formations of Finite Groups in Polynomial Time:
$\mathfrak{F}$-residuals and $\mathfrak{F}$-subnormality}\normalsize

\smallskip
Viachaslau I. Murashka

 \{mvimath@yandex.ru\}

Department of  Mathematics and Technologies of Programming,

 Francisk Skorina Gomel State University, Gomel, Belarus\end{center}

 \begin{abstract}
  For a wide family of formations $\mathfrak{F}$ it is proved that the $ \mathfrak{F}$-residual of a permutation finite group can be computed in a polynomial time. Moreover, if in the previous case $\mathfrak{F}$ is hereditary, then an $\mathfrak{F}$-subnormality of a subgroup can be checked in a polynomial time.
 \end{abstract}


 \textbf{Keywords.} Finite group; permutation group computation;
 formation; local formation; $\mathfrak{F}$-subnormality; polynomial time algorithm.

\textbf{AMS}(2010). 20D10, 20B40.

\section*{Introduction and the Main Results}

All groups considered here are finite. 
One of the central directions in the modern algebra is the study of different classes of algebraic systems (groups, semigroups, rings, Lie algebras and other). The main problems of it are to construct classes of algebraic systems, to study the structure of a given system in such class and to find wether a given system belongs to a given class or not.

This direction is well illustrated in the rather developed theory of classes of finite groups (formations, Schunk and Fitting classes).   The main results in this direction are presented in monographs of Shemetkov \cite{Shemetkov1978}, Doerk and Hawkes \cite{Doerk1992}, Ballester-Bolinches and Ezquerro \cite{BallesterBollinches2006}, Wenbin Guo \cite{Guo2015}  and others. According to zbMATH Open \cite{zb2021} (formerly known as Zentralblatt MATH) there are more than 5000 papers in this direction (20D10).


The computational theory of classes of finite groups is not as developed as its theoretical part. The main results of this theory are presented in the papers \cite{EICK2002} by Eick and  Wright and \cite{HOFLING20014} by H\"ofling  and in the corresponding to them   GAP packages ``FORMAT'' \cite{Eick2000} and ``CRISP'' \cite{Hoefling2000} respectively. These papers are dedicated to finding $\mathfrak{F}$-projectors, $ \mathfrak{F}$-injectors, $\mathfrak{F}$-residuals and $ \mathfrak{F}$-radicals of soluble groups.  Also algorithms for classes of groups where permutability (or one of its generalizations) of subgroups is a transitive relation are studied in the paper \cite{BallesterBolinches2013} by  Ballester-Bolinches,  Cosme-Ll\'opez and Esteban-Romero and in corresponding to it GAP package ``permut'' \cite{BallesterBolinches2014}. The principal novelty of this paper is our ability to deal with non-saturated (non-local) classes  of not necessary soluble groups.

A finite group can be defined in the different ways. The most known of them are defining group by presentation,  permutations or matrices. One of the main  results in the foundation of the theory of formations of finite groups is Sylow's theorems. In \cite{Kantor1985} Kantor proved that a Sylow subgroup of a permutation group of degree $n$ can be found in polynomial time of $ n$ (mod CSFG). So it is natural to ask the following question:

\begin{pr}
  For a given class of groups $\mathfrak{X}$ and a permutation group $G$ of degree $n$ is there a polynomial-type algorithm that checks wether $G$ belongs to $\mathfrak{X}$?\end{pr}

That is why we introduce the following definition:

\begin{defn}
  We shall call a class of groups $\mathfrak{X}$ $P$-recognizable if for every $K\trianglelefteq G\leq S_n$ there is a polynomial-time algorithm that  tests wether $ G/K$ belongs  $\mathfrak{X}$ or not.
\end{defn}


Recall that a formation is a class of groups closed under taking homomorphic images and subdirect products.  One of the classical ways to study the structure of a group is to find  the action of a group on its chief series. For example,  formations of nilpotent, supersoluble and quasinilpotent groups; rank \cite[VII, Definitions 2.3]{Doerk1992}, local \cite[IV, Definitions 3.1]{Doerk1992},  Baer-local \cite[IV, Definitions 4.9]{Doerk1992} and graduated (see \cite[\S3]{Shemetkov1978} or \cite[\S 5.5]{Guo2015}) formations are defined by the action of a group on its chief factors.
All these formations are particular cases of the following construction.

\begin{defn}\label{def1}
Let $\mathrm{f}$ be a function which assigns 0 or 1 to every group $G$ and its chief factor $H/K$ such that

(1) $\mathrm{f}(H/K, G)=\mathrm{f}(M/N, G)$ whenever $H/K$ and $M/N$ are $G$-isomorphic chief factors of $G$;

(2) $\mathrm{f}(H/K, G)=\mathrm{f}((H/N)/(K/N), G/N)$ for every $N\trianglelefteq G$ with $N\leq K$.\\
Such functions $\mathrm{f}$ will be called chief factor functions.
Denote by $\mathcal{C}(\mathrm{f})$ the class of groups
\begin{center}
  $(G\mid \mathrm{f}(H/K, G)=1$ for every chief factor $H/K$ of a group group $G)$.
\end{center}
\end{defn}

For every non-empty formation $\mathfrak{F}$  in every group $G$ there exists the $\mathfrak{F}$-residual of $G$, that is the smallest normal subgroup  $G^\mathfrak{F}$  of $G$ with $G/G^\mathfrak{F}\in\mathfrak{F}$. It is clear that $\mathfrak{F}=(G\mid G^\mathfrak{F}=1)$.

\begin{thm}\label{residual}
  Assume that $\mathrm{f}(H/K, G)$ can be computed in polynomial time for every group $G$ and its chief factor $H/K$. Then  $\mathfrak{F}=\mathcal{C}(\mathrm{f})$ is a  $P$-recognizable formation and $G^\mathfrak{F}$ can be computed in polynomial time for   every $ G\leq S_n$.
\end{thm}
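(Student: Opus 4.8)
The plan is to establish three things separately: that $\mathfrak{F}=\mathcal{C}(\mathrm{f})$ is a formation, that it is $P$-recognizable, and that $G^{\mathfrak{F}}$ is computable in polynomial time. The one external ingredient I would invoke is the known fact (relying, as usual in this area, on the classification of finite simple groups) that for $G\le S_n$ one can compute in polynomial time a chief series of $G$ together with the $G$-action on its factors, and that one can compute effectively inside the successive quotients $G/N$ (represent them as permutation groups of polynomially bounded degree, extract minimal normal subgroups, centralizers, orders, and so on). Everything else is built on top of this together with the hypothesis that $\mathrm{f}(H/K,G)$ is polynomial-time computable.

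First I would verify the formation axioms. Closure under homomorphic images is immediate from property (2) of Definition \ref{def1}: the chief factors of $G/N$ are exactly the $(H/N)/(K/N)$ coming from chief factors $H/K$ of $G$ with $N\le K$, and $\mathrm{f}((H/N)/(K/N),G/N)=\mathrm{f}(H/K,G)=1$. For closure under subdirect products, suppose $N_1\cap N_2=1$ with $G/N_1,G/N_2\in\mathfrak{F}$, and take any chief factor $H/K$ of $G$. Either some $N_i$ avoids $H/K$, or $N_1$ covers it, in which case I replace $H/K$ by the $G$-isomorphic chief factor $H_1/(H_1\cap K)$ with $H_1=H\cap N_1$, now avoided by $N_2$ because $H_1\cap N_2\le N_1\cap N_2=1$. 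In either situation I obtain a chief factor $A/B$ that is $G$-isomorphic to $H/K$ and avoided by some $N_i$; then $A/B\cong_G AN_i/BN_i$, and combining property (1), property (2) (applied with $N_i\le BN_i$), and $G/N_i\in\mathfrak{F}$ yields $\mathrm{f}(H/K,G)=1$. Hence $G\in\mathfrak{F}$, and $\mathfrak F$ is nonempty since the trivial group lies in it vacuously.

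For $P$-recognizability, given $K\trianglelefteq G\le S_n$ I would compute a chief series of $G$ refining $1\le K\le G$. By the Jordan--Hölder theorem and property (1) the value of $\mathrm{f}$ on a chief factor depends only on its $G$-isomorphism type, and by property (2) the factors above $K$ may be evaluated inside $G$ rather than inside $G/K$; thus $G/K\in\mathfrak{F}$ if and only if $\mathrm{f}(G_i/G_{i-1},G)=1$ for every term of the series lying above $K$. Since the series has length $O(\log|G|)$ and each evaluation of $\mathrm{f}$ is polynomial, the whole test runs in polynomial time.

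Finally, to compute $G^{\mathfrak{F}}$ I would recurse through a minimal normal subgroup $R$ of $G$, using the formation identity $(G/R)^{\mathfrak{F}}=G^{\mathfrak{F}}R/R$. Computing $(G/R)^{\mathfrak{F}}$ recursively yields $D\trianglelefteq G$ with $D=G^{\mathfrak{F}}R$, so that $G^{\mathfrak{F}}\le D$ and either $R\le G^{\mathfrak{F}}$, giving $G^{\mathfrak{F}}=D$, or $R\cap G^{\mathfrak{F}}=1$ and $D=R\times G^{\mathfrak{F}}$. The split case can occur only when $\mathrm{f}(R/1,G)=1$, since otherwise the factor $D/G^{\mathfrak{F}}\cong_G R$ would be bad; and then any $G$-invariant complement $E$ to $R$ in $D$ satisfies $G/E\in\mathfrak{F}$ and hence equals $G^{\mathfrak{F}}$, so it suffices to decide whether a complement exists and to exhibit one. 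When $R$ is nonabelian the only candidate is $C_D(R)$, checked directly. When $R$ is elementary abelian, I first test $R\le Z(D)$, and if so the search for a $G$-invariant complement becomes a system of linear equations over the prime field, solvable in polynomial time. The recursion depth equals the chief length $O(\log|G|)$. The \emph{main obstacle} I anticipate is keeping all of this genuinely polynomial in the permutation-group model: computing the chief series and, above all, representing and computing inside the iterated quotients $G/R$ (minimal normal subgroups, centralizers, and the linear-algebraic search for $G$-invariant complements) without size blow-up; by comparison the formation-theoretic and Jordan--Hölder steps are routine.
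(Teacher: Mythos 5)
Your proposal proves the same statement by a genuinely different algorithm. The paper works top-down: starting from $K=G$ it repeatedly forms the ``heads'' of the current candidate $K$, namely $K/\mathrm{Res}_N(K,G)$ and $K/\mathrm{Res}_p(K,G)$, decomposes them explicitly into minimal normal subgroups of $G$ (the non-abelian part via the non-abelian residual and iterated centralizers, the abelian part via the Jacobson radical of the enveloping algebra of $K/K'K^p$ using R\'onyai's algorithms), evaluates $\mathrm f$ on each minimal normal factor and retains only the product of the bad ones; it then gets $P$-recognizability for free from $(G/K)^{\mathfrak F}=G^{\mathfrak F}K/K$. Your bottom-up recursion through a chief series, using $(G/R)^{\mathfrak F}=G^{\mathfrak F}R/R$ and then deciding whether $G^{\mathfrak F}$ complements $R$ in $D$, is the classical lifting argument from the soluble-group setting extended to arbitrary groups; its correctness analysis (the split case forces $\mathrm f(R,G)=1$, any $G$-invariant complement is automatically the residual, and $C_D(R)$ is the unique candidate when $R$ is non-abelian) is sound, and your formation and recognizability arguments are correct. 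What your route buys is brevity and a direct recognizability test; what the paper's route buys is that it never has to decide complement existence at all, only radicals and centralizers.

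Two points in your write-up need repair before the argument is complete. First, quotients $G/N$ of a subgroup of $S_n$ need \emph{not} admit faithful permutation representations of degree polynomial in $n$, so the parenthetical ``represent them as permutation groups of polynomially bounded degree'' is false as stated; the recursion must instead carry pairs $(G,N)$ and use the Kantor--Luks quotient-group library \cite{Kantor1990a} (this is exactly what item 4 of Theorem \ref{Basic} encodes), which does deliver orders, minimal normal subgroups and centralizers of $G/N$ in polynomial time. Second, in the elementary abelian case the test $R\le Z(D)$ followed by ``a system of linear equations'' is not yet an algorithm: finding a complement to a central subgroup of a non-abelian group is a second-cohomology problem, not linear algebra. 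The missing reduction is that any $G$-invariant complement $E$ to $R$ in $D$ must contain $D'D^p$; hence either $R\cap D'D^p\neq 1$ (equivalently $R\le D'D^p$, and no complement exists), or the problem becomes finding an $\mathbb F_pG$-submodule complement to $RD'D^p/D'D^p$ in $D/D'D^p$, which genuinely is a linear system over $\mathbb F_p$ (decide whether the identity on $R$ is the restriction of some $G$-homomorphism $D/D'D^p\to RD'D^p/D'D^p$ and take the kernel of such a lift). With these two repairs your argument yields a complete alternative proof of the theorem.
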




The concept of subnormality plays an important role in the group's theory. The formational generalization of this concept  was introduced in the universe of soluble groups by Hawkes \cite{Hawkes1969} and in the universe of all groups by Shemetkov (see \cite[Definition 8.1]{Shemetkov1978}). A subgroup $H$ of $G$ is called  $\mathfrak{F}$-subnormal in $G$, if $H=G$ or there exists a maximal chain of subgroups   $H=H_0\subset H_1\subset\dots\subset H_n=G$ such that $H_{i}/\mathrm{Core}_{H_{i}}(H_{i-1})\in\mathfrak{F}$ for $i=1,\dots,n$.
 Note that if $\mathfrak{F}$ is a hereditary formation, then the word
 ``maximal'' can be omitted in this definition.
  Kegel \cite{Kegel1978} introduced the another such  generalization of subnormality. Recall \cite[Definition 6.1.4]{BallesterBollinches2006} that a subgroup $H$ of  $G$ is called $K$-$\mathfrak{F}$-\emph{subnormal} in $G$ if there is a chain of subgroups
$ H=H_0\subseteq H_1\subseteq\dots\subseteq H_n=G$
with $H_{i-1}\trianglelefteq H_i$ or $H_{i}/\mathrm{Core}_{H_{i}}(H_{i-1})\in\mathfrak{F}$ for all $i=1,\dots,n$.   If $\mathfrak{F}=\mathfrak{N}$, then the notions of  $K$-$\mathfrak{F}$-subnormal and subnormal subgroups coincide.
For more information about $\mathfrak{F}$-subnormal and $K$-$\mathfrak{F}$-subnormal subgroups see
\cite[Chapter 6]{BallesterBollinches2006}.

\begin{thm}\label{Fsubnorm}
  Let $\mathfrak{F}$ be a hereditary formation. Assume that $G^\mathfrak{F}$ can be computed in polynomial time for   every $ G\leq S_n$ and natural $n$. Then there are polynomial-time algorithms that tests wether given subgroup is $\mathfrak{F}$-subnormal or $K$-$\mathfrak{F}$-subnormal.
\end{thm}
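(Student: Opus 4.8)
The plan is to turn each membership test into a recursion that, at every call, replaces the ambient group by a strictly smaller subgroup computable in polynomial time, so that after $O(\log|G|)=O(n\log n)$ calls one reaches a trivial decision. Two facts about a hereditary formation $\mathfrak{F}$ drive the whole argument. First, for $U\leq G$ one has $U^{\mathfrak{F}}\leq G^{\mathfrak{F}}$: the image $UG^{\mathfrak{F}}/G^{\mathfrak{F}}\cong U/(U\cap G^{\mathfrak{F}})$ is a subgroup of $G/G^{\mathfrak{F}}\in\mathfrak{F}$, hence lies in $\mathfrak{F}$ by heredity, whence $U^{\mathfrak{F}}\leq U\cap G^{\mathfrak{F}}$. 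Second, a maximal subgroup $M$ of $G$ satisfies $G/\mathrm{Core}_G(M)\in\mathfrak{F}$ if and only if $G^{\mathfrak{F}}\leq M$ (if $G^{\mathfrak{F}}\leq M$ then $G^{\mathfrak{F}}\leq\mathrm{Core}_G(M)$ and $G/\mathrm{Core}_G(M)$ is a quotient of $G/G^{\mathfrak{F}}\in\mathfrak{F}$; the converse is immediate from minimality of the residual). I will also use the standard facts that, for hereditary $\mathfrak{F}$, both $\mathfrak{F}$-subnormality and $K$-$\mathfrak{F}$-subnormality are transitive and pass to intermediate subgroups (see \cite[Chapter 6]{BallesterBollinches2006}).

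These facts give the key reduction. If $G^{\mathfrak{F}}\leq U\leq G$ with $U$ proper, then in any maximal chain $U=U_0\subset U_1\subset\dots\subset U_m=G$ one has $(U_{j+1})^{\mathfrak{F}}\leq G^{\mathfrak{F}}\leq U\leq U_j$, so each $U_j$ is a maximal subgroup of $U_{j+1}$ containing $(U_{j+1})^{\mathfrak{F}}$; hence every quotient $U_{j+1}/\mathrm{Core}_{U_{j+1}}(U_j)$ lies in $\mathfrak{F}$ and $U$ is $\mathfrak{F}$-subnormal (in particular $K$-$\mathfrak{F}$-subnormal) in $G$. Applied with $U=HG^{\mathfrak{F}}$ this yields the recursion for the first algorithm: if $H\neq G$ and $H$ is $\mathfrak{F}$-subnormal in $G$, then its topmost $\mathfrak{F}$-normal maximal subgroup contains both $G^{\mathfrak{F}}$ and $H$, so $HG^{\mathfrak{F}}<G$, and by intermediate passing $H$ is $\mathfrak{F}$-subnormal in $HG^{\mathfrak{F}}$; conversely, if $HG^{\mathfrak{F}}<G$ and $H$ is $\mathfrak{F}$-subnormal in $HG^{\mathfrak{F}}$, then since $HG^{\mathfrak{F}}$ is $\mathfrak{F}$-subnormal in $G$, transitivity finishes. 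Thus the algorithm returns \emph{true} if $H=G$; computes $K=\langle H,G^{\mathfrak{F}}\rangle=HG^{\mathfrak{F}}$; returns \emph{false} if $K=G$; and otherwise recurses on $(H,K)$.

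For $K$-$\mathfrak{F}$-subnormality the only new feature is that a step of the chain may be merely normal, so the reduction splits on whether $HG^{\mathfrak{F}}$ is proper. If $HG^{\mathfrak{F}}<G$, then as above $HG^{\mathfrak{F}}$ is $K$-$\mathfrak{F}$-subnormal in $G$ and, by intermediate passing, $H$ is $K$-$\mathfrak{F}$-subnormal in $G$ iff it is so in $HG^{\mathfrak{F}}$. If instead $HG^{\mathfrak{F}}=G$ and $H\neq G$, then the topmost step of any chain (which we may take strictly increasing) cannot be $\mathfrak{F}$-normal, for that would force $HG^{\mathfrak{F}}\leq H_{n-1}<G$; hence it must be a normal step, the normal closure $H^{G}:=\langle H^{g}\mid g\in G\rangle$ is proper, $H$ is $K$-$\mathfrak{F}$-subnormal in $H^{G}$ by intermediate passing, and conversely $H^{G}\trianglelefteq G$ is $K$-$\mathfrak{F}$-subnormal in $G$, so transitivity gives the equivalence. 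The algorithm therefore returns \emph{true} if $H=G$; computes $K=HG^{\mathfrak{F}}$ and recurses on $(H,K)$ if $K<G$; and otherwise computes $N=H^{G}$, returns \emph{false} if $N=G$ and recurses on $(H,N)$ if $N<G$.

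It remains to bound the running time. By Theorem~\ref{residual} the residual of any subgroup of $S_n$ is computable in polynomial time, and the permutation-group operations used at each call --- forming $\langle H,G^{\mathfrak{F}}\rangle$, forming a normal closure, and comparing orders via a base and strong generating set --- are all polynomial in $n$. In every recursive call the ambient group is replaced by a proper subgroup of itself (either $HG^{\mathfrak{F}}$ or $H^{G}$), so by Lagrange its order at least halves; hence the recursion has depth at most $\log_2|G|\leq\log_2 n!=O(n\log n)$ and the total cost is polynomial. The main point to be careful about is the $K$-$\mathfrak{F}$-subnormal case split: one must verify that when $HG^{\mathfrak{F}}=G$ a normal descent is forced and that this keeps the ambient group strictly decreasing, which is exactly what guarantees the logarithmic depth and hence the polynomial bound.
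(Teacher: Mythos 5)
Your proposal is correct and follows essentially the same route as the paper: the same recursions on $HG^{\mathfrak{F}}$ (and, for the $K$-$\mathfrak{F}$-subnormal test, additionally on $H^G$ when $HG^{\mathfrak{F}}=G$), with the depth bound and per-step polynomial cost giving the overall polynomial running time. Your writeup is in fact more explicit than the paper's about why the reduction steps are valid (the hereditary-formation facts and the forced normal descent), and you bound the recursion depth by $\log_2|G|$ where the paper invokes the $2n-3$ bound on subgroup chain lengths; both are polynomial.
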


\section{Preliminaries}

\subsection{Groups and their classes}

Recall that $M^G$ denotes the smallest normal subgroup of $G$ which contains $M$; $M'$ is the derived subgroup of $M$; $M^p$ is the subgroup generated by $p$-th powers of elements of $M$; $S_n$ denotes\,the symmetric group on $n$ elements; a formation $\mathfrak{F}$ is called hereditary, if $H\leq G\in\mathfrak{F}$ implies\,$H\in\mathfrak{F}$.   

The material of this section can be found, for example, in \cite[p. 5-8]{Doerk1992}.   Let $\Omega$  be a set. A group $G$ is called an $\Omega$-\emph{group} if there is associated with each element $\omega\in\Omega$ an endomorphism of $G$ denoted for all $g\in G$ by
$g\rightarrow g\omega$. A subgroup $U$ of $G$ is called $\Omega$-\emph{admissible} if $u\omega\in U$ for all $u\in U$ and $\omega\in\Omega$.
Evidently the intersection and the join of $\Omega$-admissible subgroups are again
$\Omega$-admissible.
If $N$ is an $\Omega$-admissible normal subgroup of $G$, the quotient group $G/N$
may be regarded naturally as an $\Omega$-group via the action defined for all $g\in G$ and $\omega\in\Omega$ by
$(Ng)\omega=N(g\omega)$.
   Finally if $G$ and $H$ are $\Omega$-groups, a isomorphism $\alpha: G \rightarrow H$ is called an $\Omega$-\emph{isomorphism} if for all $g\in G$ and $\omega\in\Omega$
holds $\alpha(g\omega)=\alpha(g)\omega$.

\begin{thm}[The Isomorphism Theorems]\label{th1} Let $\Omega$ be a set and let $G$  be an  $\Omega$-group.

$(1)$ If $U$ and $N$ are  $\Omega$-admissible subgroups of $G$ and $U$ normalizes $N$,
then $UN/N \simeq U/(U \cap N)$ as $\Omega$-groups.

$(2)$   If $M$ and $N$ are $\Omega$-admissible normal subgroups of $G$ and $N < M$,
then the $\Omega$-groups $(G/N)/(M/N)$ and $G/M$ are $\Omega$-isomorphic.
\end{thm}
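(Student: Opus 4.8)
The plan is to reduce both parts to the classical first and third isomorphism theorems for ordinary groups, and then to verify the single extra point that each canonical isomorphism is compatible with the $\Omega$-action. Throughout I would use the two facts recorded immediately above the statement: that intersections and joins of $\Omega$-admissible subgroups are again $\Omega$-admissible, and that the $\Omega$-action on a quotient $G/N$ by an $\Omega$-admissible normal subgroup is given by $(Ng)\omega=N(g\omega)$.

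For part $(1)$, since $U$ normalizes $N$ the product $UN$ is a subgroup of $G$, and as the join of the $\Omega$-admissible subgroups $U$ and $N$ it is $\Omega$-admissible; moreover $N\trianglelefteq UN$, so the quotient $\Omega$-group $UN/N$ is defined. I would then consider the map $\varphi\colon U\to UN/N$, $u\mapsto Nu$. The classical first isomorphism theorem already gives that $\varphi$ is a surjective group homomorphism with kernel $U\cap N$ (which is $\Omega$-admissible as an intersection), inducing a group isomorphism $\overline{\varphi}\colon U/(U\cap N)\to UN/N$. The only thing left to check is $\Omega$-equivariance: for $u\in U$ and $\omega\in\Omega$ one has $\varphi(u\omega)=N(u\omega)=(Nu)\omega=\varphi(u)\omega$, where the middle equality is precisely the definition of the $\Omega$-action on $UN/N$. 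Hence $\varphi$, and therefore $\overline{\varphi}$, is an $\Omega$-isomorphism.

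Part $(2)$ I would handle in the same spirit. Since $N\le M$ are $\Omega$-admissible normal subgroups of $G$, the subgroup $M/N$ is an $\Omega$-admissible normal subgroup of the $\Omega$-group $G/N$, so $(G/N)/(M/N)$ is a well-defined $\Omega$-group. The map $\psi\colon G/N\to G/M$ given by $\psi(Ng)=Mg$ is, by the classical third isomorphism theorem, a well-defined surjective homomorphism with kernel $M/N$, yielding a group isomorphism $(G/N)/(M/N)\simeq G/M$. To see it is an $\Omega$-isomorphism I would compute, for $g\in G$ and $\omega\in\Omega$, $\psi\bigl((Ng)\omega\bigr)=\psi\bigl(N(g\omega)\bigr)=M(g\omega)=(Mg)\omega=\psi(Ng)\omega$, using only the defining formula for the induced $\Omega$-action on each quotient. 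Thus $\psi$ descends to an $\Omega$-isomorphism, as required.

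I expect no genuine obstacle here: the group-theoretic content is exactly the ordinary isomorphism theorems, so the whole burden is the routine verification that each canonical map intertwines the operator action. The one point deserving care is bookkeeping of admissibility—confirming at each stage that the subgroups and quotients involved really are $\Omega$-groups—which is immediate from the stated closure properties of $\Omega$-admissible subgroups together with the definition of the quotient action.
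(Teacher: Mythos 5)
Your proof is correct and is exactly the standard argument: the paper quotes this theorem as background from \cite{Doerk1992} without proving it, and your reduction to the classical isomorphism theorems plus the routine check of $\Omega$-equivariance (using the closure of admissible subgroups under intersections and joins, and the defining formula $(Ng)\omega=N(g\omega)$ for the quotient action) is precisely the textbook proof.
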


An $\Omega$-group is called $\Omega$-\emph{simple} if 1 and $G$   are the only $\Omega$-admissible
normal subgroups of $G$. A a subnormal chain $U=U_0, U_1, \dots, U_n=G$   from $U$ to $G$  is called $\Omega$-\emph{series} if   all of its terms are $\Omega$-admissible.
An $\Omega$-series is called $\Omega$-\emph{composition series} if each factor $U_i/U_{i-1}$ is $\Omega$-simple for $i=1,\dots, n$.

\begin{thm}[The Jordan-H\"older Theorem]\label{th2}  Let $G$ be an $\Omega$-group, and let
\[1=N_0\vartriangleleft N_1\vartriangleleft\dots\vartriangleleft N_n=G
\hspace{3mm} and \hspace{3mm} 1=M_0\vartriangleleft M_1\vartriangleleft\dots\vartriangleleft M_m=G\]
be two $\Omega$-composition series of $G$. Then $m =n$ and there exists a permutation $\pi\in S_n$
such that for $i =1,\dots, n$ the factor $N_i/N_{i-1}$ is $\Omega$-isomorphic with $M_{\pi(i)}/M_{\pi(i)-1}$.
\end{thm}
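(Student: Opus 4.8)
The plan is to prove the statement by induction on the length $n$ of the first series, in the strengthened form: an $\Omega$-group admitting an $\Omega$-composition series of length $n$ has all of its $\Omega$-composition series of length $n$, with factors pairwise $\Omega$-isomorphic after a suitable permutation. (Finiteness guarantees that such a series exists.) The cases $n=0$, where $G=1$, and $n=1$, where $G$ is $\Omega$-simple and the only $\Omega$-admissible normal subgroups are $1$ and $G$, are immediate. For the inductive step I would write $L=N_{n-1}$ and $K=M_{m-1}$ and split according to whether $L=K$.

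Two routine observations feed the argument. An $\Omega$-isomorphism sends $\Omega$-admissible normal subgroups to $\Omega$-admissible normal subgroups, hence preserves $\Omega$-simplicity; together with the correspondence between $\Omega$-admissible normal subgroups of $G/L$ and those of $G$ above $L$, the $\Omega$-simplicity of $G/L$ says exactly that $L$ is a maximal $\Omega$-admissible normal subgroup of $G$. Moreover $D=L\cap K$ and $LK$ are again $\Omega$-admissible and normal in $G$.

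If $L=K$, I would apply the inductive hypothesis to $L$: its two series $1=N_0\trianglelefteq\cdots\trianglelefteq N_{n-1}$ and $1=M_0\trianglelefteq\cdots\trianglelefteq M_{m-1}$ then have equal length, forcing $m=n$, with matching factors, and the common top factor $G/L=G/K$ closes the gap. If $L\neq K$, maximality gives $LK=G$, so Theorem \ref{th1}(1) yields $G/K=LK/K\simeq L/D$ and $G/L=LK/L\simeq K/D$; since $G/K$ and $G/L$ are $\Omega$-simple, the first observation makes $L/D$ and $K/D$ $\Omega$-simple as well. Choosing an $\Omega$-composition series of the finite group $D$ and appending $L/D$, respectively $K/D$, yields $\Omega$-composition series of $L$ and $K$. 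The inductive hypothesis applied to $L$ shows that the factors $N_1/N_0,\dots,N_{n-1}/N_{n-2}$ coincide, up to $\Omega$-isomorphism and permutation, with the factors of $D$ together with $L/D$, and fixes the length of the chosen series of $D$; the inductive hypothesis applied to $K$ then gives $m=n$ and matches $M_1/M_0,\dots,M_{m-1}/M_{m-2}$ with the factors of $D$ together with $K/D$. Comparing the two full multisets of factors and invoking $L/D\simeq G/K$ and $K/D\simeq G/L$, both series are seen to have, up to $\Omega$-isomorphism, exactly the factors of $D$ together with $G/L$ and $G/K$, which produces the desired $\pi$.

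The delicate point is the case $L\neq K$: one must confirm that the chains built through $D$ are genuine $\Omega$-composition series, which rests entirely on transporting $\Omega$-simplicity along the isomorphisms of Theorem \ref{th1}(1), and then track the factors as multisets so that the crossed isomorphisms $L/D\simeq G/K$ and $K/D\simeq G/L$ identify the two collections. A second, equally valid route would deduce the theorem from the Schreier refinement theorem through the Zassenhaus butterfly lemma for $\Omega$-groups (itself a consequence of Theorem \ref{th1}); there the effort moves to building the common refinement of the two series and to observing that an $\Omega$-composition series admits no proper $\Omega$-refinement.
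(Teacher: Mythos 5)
You should know at the outset that the paper contains no proof of Theorem \ref{th2}: it is stated as a preliminary, with the whole section attributed to \cite[p. 5-8]{Doerk1992}, so there is no in-paper argument to compare yours against. Judged on its own, your proof is correct; it is the classical inductive proof of Jordan--H\"older transported to $\Omega$-groups. The strengthened induction hypothesis (every $\Omega$-group with some $\Omega$-composition series of length $n$ has \emph{all} its $\Omega$-composition series of length $n$, with matching factors) is the right formulation, the split on $L=N_{n-1}$ versus $K=M_{m-1}$ is standard, and the two supporting observations you isolate are exactly what must be checked and do hold under the paper's definitions: an $\Omega$-isomorphism (and its inverse) carries $\Omega$-admissible normal subgroups to $\Omega$-admissible normal subgroups, hence preserves $\Omega$-simplicity, and $\Omega$-admissible normal subgroups of $G/L$ correspond to $\Omega$-admissible normal subgroups of $G$ above $L$, so $\Omega$-simplicity of $G/L$ is maximality of $L$. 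Two points you state tersely deserve one line each if this were written out: first, in the case $L\neq K$, the subgroup $LK$ is $\Omega$-admissible and normal in $G$ and contains $L$, and it cannot equal $L$, since $K\subsetneq L\subsetneq G$ would contradict maximality of $K$; hence $LK=G$ by maximality of $L$. Second, the order of the two applications of the inductive hypothesis matters: you must first apply it to $L$ (which is known to have a series of length $n-1$) to pin down the length of the chosen series of $D=L\cap K$, and only then to $K$, whose series through $D$ now has length $n-1<n$; you do have this order right. The existence of an $\Omega$-composition series of $D$ is guaranteed because all groups in this paper are finite, as you note. Your alternative route via the Zassenhaus lemma and Schreier refinement would work equally well; the inductive argument you chose is the shorter one for the finite case and matches the treatment in the cited source.
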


\subsection{Computational conventions}

Here we use standard computational conventions of abstract finite groups equipped
with poly\-nomial-time procedures to compute products and inverses of elements (for the related
abstract notion of black-box groups, see \cite[Chapter 2]{Seress2003}).

Unless stated
otherwise, for both input and output, groups are specified by generators. We will consider only $G=\langle S\rangle\leq S_n$ with $|S|\leq n^2$. If necessary, Sims' algorithm \cite[Parts 4.1 and 4.2]{Seress2003} can be used to arrange that $|S|\leq n^2$.

Quotient groups are specified by generators of a group and its normal subgroup.

According to \cite{Babai1986} the following result, all subgroups chains have  the bounded length:

\begin{lem}[\cite{Babai1986}]\label{chain}
   Given $G \leq S_n$ every chain of subgroups of $G$ has at most $2n-3$  members for $n\geq 2$.
\end{lem}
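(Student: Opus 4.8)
The plan is to prove the equivalent statement that the number of proper inclusions in any chain of subgroups of $G\le S_n$ is at most $2n-3$ for $n\ge 2$. Write $\ell(H)$ for the maximal number of strict inclusions in a chain of subgroups of a finite group $H$. Since every chain of subgroups of $G$ is also a chain of subgroups of $S_n$, we have $\ell(G)\le\ell(S_n)$, so it suffices to show $\ell(S_n)\le 2n-3$, which I would establish by strong induction on $n$ (the base cases $n=2,3$ being a direct check, e.g.\ $\ell(S_2)=1=2\cdot 2-3$). The two engines of the induction are subadditivity lemmas. First, for $N\trianglelefteq H$ one has $\ell(H)\le\ell(N)+\ell(H/N)$: along any chain $1=C_0<\dots<C_m=H$ each step must strictly increase at least one of $C_i\cap N$ or $C_iN/N$, since $C_{i-1}\cap N=C_i\cap N$ together with $C_{i-1}N=C_iN$ forces $C_{i-1}=C_i$, so the $m$ steps inject into the union of a chain in $N$ and a chain in $H/N$ (here Theorem~\ref{th1} supplies the isomorphism $C_iN/N\simeq C_i/(C_i\cap N)$). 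Taking $N=1\times B$ in $A\times B$ yields the direct-product bound $\ell(A_1\times\dots\times A_r)\le\sum_i\ell(A_i)$.

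For the inductive step I would write $\ell(S_n)=1+\ell(M)$, where $M$ is the top term of a longest chain, hence a maximal subgroup of $S_n$, and bound $\ell(M)\le 2n-4$ according to the permutation type of $M$. If $M$ is intransitive with orbit sizes $n_1,\dots,n_r$ ($r\ge2$), then $M$ embeds in $S_{n_1}\times\dots\times S_{n_r}$, so by the product bound and the inductive hypothesis $\ell(M)\le\sum_i\ell(S_{n_i})\le 2n-4$ (the worst case being one fixed point and one large orbit, where $\ell(S_1)=0$; when all orbits are nontrivial one even gets $2n-3r\le 2n-6$). If $M$ is transitive and imprimitive with $b$ blocks of size $a$ ($ab=n$, $a,b>1$), let $K$ be the kernel of the action on blocks; then $K$ embeds in $(S_a)^b$ and $M/K$ embeds in $S_b$, so $\ell(M)\le\ell(K)+\ell(M/K)\le b(2a-3)+(2b-3)=2n-b-3\le 2n-5$. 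Thus as long as the top maximal subgroup is not primitive the bound closes with room to spare.

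The main obstacle is the primitive case, $M$ primitive of degree $n$, where one must separate the ``giants'' from the rest. If $M\supseteq A_n$, then $M=A_n$ and $G=S_n$; since $A_n\trianglelefteq S_n$ with quotient of order $2$, the subadditivity lemma gives $\ell(S_n)=1+\ell(A_n)$ exactly, and I would bound $\ell(A_n)\le 2n-4$ by running the same case analysis inside $A_n$: its maximal subgroups are the point stabilizer $A_{n-1}$ and the intransitive, imprimitive and primitive subgroups induced from those of $S_n$, none of which is again a degree-$n$ giant, so the recursion terminates. If instead $M$ is primitive with $M\not\supseteq A_n$, then $M$ has very small order: using the order bounds for primitive permutation groups (Bochert's minimal-degree estimate, or the Praeger--Saxl bound $|M|<4^n$, refined by the much sharper bounds available for the affine, product-action and almost simple types) one gets $\ell(M)\le\Omega(|M|)\le\log_2|M|$, which for all large $n$ lies far below $2n-4$, the finitely many remaining small $n$ being checked directly. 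The delicate points, and where I expect the real work to lie, are exactly this primitive estimate and the bookkeeping needed to keep $\ell(A_n)$ at $2n-4$ rather than $2n-3$, so that peeling off the pair $S_n>A_n$ does not cost an extra unit and spoil the bound.
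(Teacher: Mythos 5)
First, a point of reference: the paper contains no proof of this lemma at all --- it is imported verbatim, citation and all, from Babai's paper \cite{Babai1986}, so your attempt has to be judged against that work rather than against an argument in the text. Your skeleton is the standard one (it is also the skeleton of Babai's proof and of the later exact determination of $\ell(S_n)$ by Cameron, Solomon and Turull), and everything outside the primitive case is correct: the subadditivity $\ell(H)\le\ell(N)+\ell(H/N)$ via the Dedekind-type argument, the reduction $\ell(S_n)=1+\ell(M)$ for $M$ maximal, the intransitive estimate $\ell(M)\le 2n-5$, the imprimitive estimate $\ell(M)\le 2n-b-3\le 2n-5$, and the exact peeling $\ell(S_n)=\ell(A_n)+1$ are all sound, and they do reduce the theorem to bounding $\ell(M)$ by $2n-4$ for $M$ primitive with $M\not\supseteq A_n$.

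The genuine gap is that neither of the bounds you actually invoke can close that case, so the step you yourself flag as delicate in fact fails as written. Praeger--Saxl gives $|M|<4^n$, hence only $\ell(M)\le\Omega(|M|)\le\log_2|M|<2n$; you need $2n-4$, so this falls short for \emph{every} $n$, not merely small ones. Bochert's index bound $[S_n:M]\ge\lfloor(n+1)/2\rfloor!$ is worse here: it yields $\log_2|M|\approx\tfrac{n}{2}\log_2 n$, which for large $n$ lies far \emph{above} $2n-4$, the opposite of what you assert; and Bochert's minimal-degree theorem applies only to doubly transitive groups (uniprimitive groups such as $S_k$ acting on $2$-subsets have minimal degree $O(\sqrt n)$), so it cannot cover the uniprimitive case either. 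The estimates that do suffice are of two kinds, and both are heavy theorems: either CFSG-dependent bounds (Cameron, Mar\'oti: $|M|<50\,n^{\sqrt n}$ for primitive $M\not\supseteq A_n$), after which one must still check the classified primitive groups of small degree one by one; or Babai's elementary bounds, namely his $\exp(O(\sqrt n\,\log^2 n))$ bound for uniprimitive groups together with a quasipolynomial bound for doubly transitive groups (Bochert's minimal-degree estimate combined with a base-size argument). The elementary route is precisely the content of the paper Babai wrote --- note that the author states this lemma \emph{without} the ``mod CFSG'' tag he attaches to other results, so the CFSG shortcut would even change the character of the statement. As it stands, your proposal correctly reduces the theorem to its hard case and then assumes that case.
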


We need the following well known basis tools in our proves (see, for example \cite{Kantor1990a} or \cite{Seress2003}).

\begin{thm}\label{Basic}
  Given $G = \langle S\rangle\leq S_n$, in polynomial time one can solve the following problems:

  \begin{enumerate}
    \item  Find $|G|$.

    \item Given normal subgroups $A$ and $B$ of $G$,   find a composition series for $G$ containing them.

    \item Given $T\subseteq G$ find $\langle T\rangle^G$.

    \item (mod CFSG) Given $N, K \leq S_n$ such that $N/K$ is normalized by $G/K$,
     find $C_{G/K}(N/K)$ \cite[P6(i)]{Kantor1990a}.

    \item (mod CFSG) Given a prime $p$ dividing $|G|$, find a Sylow $p$-subgroup $P$ of $G$ and $N_G(P)$ \cite{Kantor1990}.

\item Given   $H=\langle S_1\rangle, K=\langle S_2\rangle \leq G$ find $\langle H, K\rangle=\langle S_1, S_2\rangle$ and $[H, K]=\langle \{[s_1, s_2]\mid s_1\in S_1, s_2\in S_2\rangle^{\langle H, K\rangle}$.


  \end{enumerate}
\end{thm}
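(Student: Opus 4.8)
The plan is to give two algorithms, one for each notion of subnormality, both exploiting the hypothesis that $G^{\mathfrak{F}}$ is computable in polynomial time together with the chain-length bound of Lemma \ref{chain}. Throughout I would use that $\mathfrak{F}$ is hereditary, so that in the definition of $\mathfrak{F}$-subnormality the word ``maximal'' may be dropped (as noted in the excerpt): thus both notions reduce to the existence of a chain $H=H_0\subseteq H_1\subseteq\dots\subseteq H_n=G$ with each step of the appropriate type. The key structural observation is that for both notions there is a canonical \emph{greedy} choice of the next term going downward from $G$, which lets me avoid searching over all chains.

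First I would treat $\mathfrak{F}$-subnormality. The crucial fact, valid because $\mathfrak{F}$ is a hereditary formation, is that $H$ is $\mathfrak{F}$-subnormal in $G$ if and only if $H$ is $\mathfrak{F}$-subnormal in every subgroup $L$ with $H\leq L\leq G$ that contains $HG^{\mathfrak{F}}$, and more usefully that one can descend via the subgroup $HG^{\mathfrak{F}}$: namely $H$ is $\mathfrak{F}$-subnormal in $G$ iff $H\leq HG^{\mathfrak{F}}\neq G$ forces us to recurse into $HG^{\mathfrak{F}}$, while if $HG^{\mathfrak{F}}=G$ the only way to make progress is that already $G/\mathrm{Core}_G(M)\in\mathfrak{F}$ for a suitable intermediate $M$. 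Concretely, I would use the characterization that $H$ is $\mathfrak{F}$-subnormal in $G$ precisely when, setting $W=HG^{\mathfrak{F}}$, one has $W\neq G$ and $H$ is $\mathfrak{F}$-subnormal in $W$, with the base case $H=G$. To run this I compute $G^{\mathfrak{F}}$ in polynomial time by hypothesis, form $W=\langle H, G^{\mathfrak{F}}\rangle$ using Theorem \ref{Basic}(6), and compare $|W|$ with $|G|$ via Theorem \ref{Basic}(1); if $W=G$ then $H$ is not $\mathfrak{F}$-subnormal (since no proper overgroup in an $\mathfrak{F}$-chain can be reached), otherwise recurse in $W$. By Lemma \ref{chain} the recursion depth is at most $2n-3$, and each level performs only polynomially many group operations, so the total running time is polynomial.

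For $K$-$\mathfrak{F}$-subnormality the steps may be either normal inclusions or $\mathfrak{F}$-steps. Here I would again descend greedily from $G$, but now the correct canonical term is the smaller of the two natural candidates: form both $G^{\mathfrak{F}}$ and the largest term one may reach by a normal step. The clean formulation is that $H$ is $K$-$\mathfrak{F}$-subnormal in $G$ iff $H$ is $K$-$\mathfrak{F}$-subnormal in the subgroup $D=H^{G}\cap\langle H, G^{\mathfrak{F}}\rangle$ refined appropriately; more precisely I would use the known descending invariant that combining a normal reduction $G\rightsquigarrow H^{G}$ (the normal closure, computable by Theorem \ref{Basic}(3)) with the $\mathfrak{F}$-reduction $G\rightsquigarrow HG^{\mathfrak{F}}$ yields a unique smallest ``one-step reachable'' overgroup $W$ of $H$, and that $H$ is $K$-$\mathfrak{F}$-subnormal in $G$ iff $W\neq G$ and $H$ is $K$-$\mathfrak{F}$-subnormal in $W$, with base case $H=G$. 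Each descent step computes $H^{G}$, $G^{\mathfrak{F}}$, their relevant join or intersection, and the orders involved, all in polynomial time; again Lemma \ref{chain} bounds the number of steps by $2n-3$.

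The main obstacle, and the step needing real justification rather than routine bookkeeping, is proving the correctness of the greedy descent: that taking $W=HG^{\mathfrak{F}}$ (respectively the combined normal-and-$\mathfrak{F}$ closure) is without loss of generality, i.e.\ that if \emph{any} valid chain from $H$ to $G$ exists then one exists whose first step up from $H$ lands inside $W$, equivalently whose last step down from $G$ passes through $W$. For $\mathfrak{F}$-subnormality this rests on the fact that $G^{\mathfrak{F}}$ is the \emph{smallest} normal subgroup with $\mathfrak{F}$-quotient, so any proper term $H_{n-1}$ with $G/\mathrm{Core}_G(H_{n-1})\in\mathfrak{F}$ must contain $G^{\mathfrak{F}}$, hence $H_{n-1}\supseteq HG^{\mathfrak{F}}=W$; I would verify this inclusion carefully and check that $H$ remains $\mathfrak{F}$-subnormal in $W$ by heredity. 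For $K$-$\mathfrak{F}$-subnormality the argument must additionally handle the normal-step alternative, showing that the minimal reachable overgroup is still canonically determined; this case analysis, together with confirming that the recursion strictly decreases $|G|$ at each step (so that Lemma \ref{chain} genuinely applies), is where the bulk of the proof's attention will go.
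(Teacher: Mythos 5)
Your proposal does not address the statement it was supposed to prove. The statement in question is Theorem~\ref{Basic}, the compendium of standard polynomial-time permutation-group algorithms: computing $|G|$, finding a composition series through given normal subgroups, computing normal closures $\langle T\rangle^G$, computing centralizers $C_{G/K}(N/K)$ (mod CFSG), finding Sylow subgroups and their normalizers (mod CFSG), and computing joins and commutators of subgroups. What you have written is instead a proof attempt of Theorem~\ref{Fsubnorm} --- the result that $\mathfrak{F}$-subnormality and $K$-$\mathfrak{F}$-subnormality can be tested in polynomial time when $G^{\mathfrak{F}}$ is computable and $\mathfrak{F}$ is hereditary. Nothing in your text establishes any of the six items of Theorem~\ref{Basic}; on the contrary, you explicitly invoke items (1), (3) and (6) of Theorem~\ref{Basic} as black boxes inside your argument (to compute orders, normal closures $H^G$, and joins $\langle H, G^{\mathfrak{F}}\rangle$). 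As a proof of Theorem~\ref{Basic} this is therefore circular as well as off-target.

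For the record, the paper does not prove Theorem~\ref{Basic} at all: it is stated as a collection of well-known facts with citations to the literature (Sims' algorithm and the standard polynomial-time library in Seress's book for items 1--3 and 6, Kantor--Luks-type results \cite{Kantor1990a} for centralizers of chief factors, and Kantor's CFSG-dependent Sylow algorithms \cite{Kantor1990} for item 5). If you were assigned this statement, the expected ``proof'' is an accurate attribution of each item to its source, noting which items depend on the classification of finite simple groups, and observing that item 6 reduces to item 3 since $[H,K]$ is the normal closure in $\langle H,K\rangle$ of the commutators of generators. Your greedy-descent argument belongs with Theorem~\ref{Fsubnorm}; there it is essentially the paper's own approach (descend through $HG^{\mathfrak{F}}$, respectively through $HG^{\mathfrak{F}}$ or $H^G$, with Lemma~\ref{chain} bounding the recursion depth), but it cannot be spliced in here.
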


\begin{lem}[{\cite[p. 155]{Seress2003}}]\label{transform}
Let $H$ and $K$ be  normal subgroups of $G$ such that $H/K$ is an elementary abelian $p$-group for some prime $p$. Then $H/K$ can be considered as $\mathbb{F}_pG$-module.  Every generator of $G$  induces by conjugation an linear transformation of this module. Its   matrix  can be computed in a polynomial time.
\end{lem}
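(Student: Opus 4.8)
The plan is to reduce both notions to a single greedy ``descent'' that starts at $G$ and shrinks toward $H$ as fast as the definition allows, and to show that this descent reaches $H$ exactly when the relevant subnormality holds. Everything rests on two consequences of heredity of $\mathfrak{F}$. First, for any $U\le V$ the condition $V/\mathrm{Core}_V(U)\in\mathfrak{F}$ is equivalent to $V^\mathfrak{F}\le U$: since $V^\mathfrak{F}\trianglelefteq V$, the containment $V^\mathfrak{F}\le U$ forces $V^\mathfrak{F}\le\mathrm{Core}_V(U)$, and then $V/\mathrm{Core}_V(U)$ is a quotient of $V/V^\mathfrak{F}\in\mathfrak{F}$; the converse is immediate. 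Second, the residual is monotone: if $A\le B$ then $A^\mathfrak{F}\le B^\mathfrak{F}$, because $AB^\mathfrak{F}/B^\mathfrak{F}\le B/B^\mathfrak{F}\in\mathfrak{F}$ lies in $\mathfrak{F}$ by heredity and is isomorphic to $A/(A\cap B^\mathfrak{F})$, whence $A^\mathfrak{F}\le A\cap B^\mathfrak{F}\le B^\mathfrak{F}$. Using the first fact together with the remark that ``maximal'' may be dropped for hereditary $\mathfrak{F}$, being $\mathfrak{F}$-subnormal means exactly that there is a chain $H=H_0\subseteq\dots\subseteq H_n=G$ with $H_i^\mathfrak{F}\le H_{i-1}$ for all $i$.

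For $\mathfrak{F}$-subnormality I would run the iteration $K_0=G$ and $K_{j+1}=\langle H, K_j^\mathfrak{F}\rangle$. Since $H\le K_j$ and $K_j^\mathfrak{F}\le K_j$, this sequence is descending, contains $H$ at every term, and by Lemma~\ref{chain} becomes constant within $2n-3$ terms. Reading a strictly descending run backwards yields a chain with $K_j^\mathfrak{F}\le K_{j+1}$, so if the iteration reaches $H$ then $H$ is $\mathfrak{F}$-subnormal. Conversely, given any valid chain $H=H_0\subseteq\dots\subseteq H_n=G$, an induction using monotonicity shows $K_j\le H_{n-j}$: if $K_j\le H_{n-j}$ then $K_j^\mathfrak{F}\le H_{n-j}^\mathfrak{F}\le H_{n-j-1}$ and $H\le H_{n-j-1}$, so $K_{j+1}=\langle H,K_j^\mathfrak{F}\rangle\le H_{n-j-1}$. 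Hence $K_n\le H_0=H$, and since $H\le K_n$ always, $K_n=H$; thus $H$ is $\mathfrak{F}$-subnormal iff the iteration reaches $H$. Each step computes $K_j^\mathfrak{F}$ (hypothesis) and a join (Theorem~\ref{Basic}(6)) in polynomial time, and there are $O(n)$ steps.

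For $K$-$\mathfrak{F}$-subnormality the only change is the descent operator: I would take $K_{j+1}=H^{\langle H, K_j^\mathfrak{F}\rangle}$, that is, first an $\mathfrak{F}$-step down to $\langle H,K_j^\mathfrak{F}\rangle$ and then the normal closure of $H$ inside it. Each such move is a genuine $K$-$\mathfrak{F}$-step (an $\mathfrak{F}$-step followed by a normal step), so reaching $H$ again certifies $K$-$\mathfrak{F}$-subnormality. For the converse I would again prove $K_j\le H_{n-j}$ along any valid chain, now splitting on the type of the step $H_{n-j}\to H_{n-j-1}$: if it is an $\mathfrak{F}$-step then $\langle H,K_j^\mathfrak{F}\rangle\le H_{n-j-1}$ and the normal closure of $H$ stays inside $H_{n-j-1}$; if it is a normal step then $H_{n-j-1}\cap\langle H,K_j^\mathfrak{F}\rangle$ is normal in $\langle H,K_j^\mathfrak{F}\rangle$ and contains $H$, so once more $K_{j+1}\le H_{n-j-1}$. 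The step is polynomial by the hypothesis together with Theorem~\ref{Basic}(3),(6), and Lemma~\ref{chain} bounds the number of rounds.

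The hard part will be the backward (certification) direction for $K$-$\mathfrak{F}$-subnormality: the defining condition is a disjunction (``normal or $\mathfrak{F}$''), so a naive greedy that minimizes over both options at each step need not produce genuine single steps. The device that resolves this is the two-phase operator $H^{\langle H,K_j^\mathfrak{F}\rangle}$, which is simultaneously small enough to lie below every valid chain (by the case split above) and built from two legitimate steps, so that the descent itself is an admissible $K$-$\mathfrak{F}$-chain.
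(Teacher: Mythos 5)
Your proposal does not address the statement at hand. The statement is Lemma~\ref{transform}: for normal subgroups $K\leq H$ of $G$ with $H/K$ elementary abelian of exponent $p$, the quotient $H/K$ carries the structure of an $\mathbb{F}_pG$-module under conjugation, and the matrix of the linear transformation induced by each generator of $G$ can be computed in polynomial time. What you have written instead is a proof attempt for Theorem~\ref{Fsubnorm} (polynomial-time testing of $\mathfrak{F}$-subnormality and $K$-$\mathfrak{F}$-subnormality via a greedy descent using $\mathfrak{F}$-residuals). Nothing in your text mentions the module structure, the linearity of the conjugation action, a basis of $H/K$, or how to express conjugates of basis vectors in that basis --- which is the entire content of the lemma. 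So as a proof of Lemma~\ref{transform} there is not merely a gap; the argument is about a different result and none of it can be repurposed.

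For the record, a proof of Lemma~\ref{transform} would proceed along these lines (the paper itself gives no proof, citing \cite[p.~155]{Seress2003}): since $H$ and $K$ are normal in $G$, conjugation by any $g\in G$ induces an automorphism of $H/K$; because $H/K$ is elementary abelian of exponent $p$, it is a vector space over $\mathbb{F}_p$ and every such automorphism is $\mathbb{F}_p$-linear, so $H/K$ becomes an $\mathbb{F}_pG$-module. Computationally, one extracts from generators of $H$ a basis $\{h_1K,\dots,h_dK\}$ of $H/K$ (standard polynomial-time machinery for permutation groups, e.g.\ refining a composition series of $H/K$ or a sifting procedure), then for each generator $g$ of $G$ and each basis element computes $g^{-1}h_ig$ and expresses its image modulo $K$ as an $\mathbb{F}_p$-linear combination of the $h_jK$; the resulting exponent vectors form the columns of the desired matrix, and all of these steps run in polynomial time. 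Your descent argument for Theorem~\ref{Fsubnorm}, whatever its merits, should be submitted as a proof of that theorem, not of this lemma.
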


\section{Proves of the Main Results}

\subsection{Proof of Theorem \ref{residual}}

The first step is to prove 

\begin{lem}
 If $ \mathrm{f}$ is a chief factor function, then $\mathcal{C}(\mathrm{f})$ is a formation.
\end{lem}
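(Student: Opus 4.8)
The plan is to verify directly that $\mathcal{C}(\mathrm{f})$ enjoys the two defining closure properties of a formation: closure under homomorphic images (Q-closure) and under subdirect products (R$_0$-closure, for which it suffices to treat two normal subgroups $N_1,N_2\trianglelefteq G$ with $N_1\cap N_2=1$). The whole argument rests on the cover/avoidance dichotomy. For a normal subgroup $N\trianglelefteq G$ and a chief factor $H/K$ of $G$, the subgroup $(H\cap N)K/K$ is a $G$-admissible normal subgroup of the $G$-simple factor $H/K$, hence is either trivial or everything; that is, $N$ either \emph{avoids} $H/K$ (meaning $H\cap N\leq K$) or \emph{covers} it (meaning $H\leq NK$). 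In the avoidance case Theorem \ref{th1} provides a $G$-isomorphism between $H/K$ and $HN/KN$, and since $N\leq KN$, properties (1) and (2) of Definition \ref{def1} combine to give $\mathrm{f}(H/K,G)=\mathrm{f}((HN/N)/(KN/N),G/N)$, identifying the $\mathrm{f}$-value of $H/K$ with that of an honest chief factor of $G/N$. I would record at the outset that $1\in\mathcal{C}(\mathrm{f})$ vacuously, so the class is non-empty.

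For Q-closure I would use that the chief factors of $G/N$ are exactly the $(H/N)/(K/N)$ with $H/K$ a chief factor of $G$ and $N\leq K$, so property (2) transfers the condition $\mathrm{f}(\cdot)=1$ from $G$ to $G/N$ term by term; hence $G\in\mathcal{C}(\mathrm{f})$ forces $G/N\in\mathcal{C}(\mathrm{f})$. This step is routine.

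For R$_0$-closure, take $N_1,N_2\trianglelefteq G$ with $N_1\cap N_2=1$ and $G/N_1,G/N_2\in\mathcal{C}(\mathrm{f})$, and let $H/K$ be any chief factor of $G$; the goal is $\mathrm{f}(H/K,G)=1$. If $N_1$ avoids $H/K$, the avoidance computation above pushes the value into $G/N_1\in\mathcal{C}(\mathrm{f})$, where it is $1$. The only genuine obstacle is when $N_1$ \emph{covers} $H/K$, since then the factor cannot be lifted to $G/N_1$ directly. Here I would set $A=H\cap N_1$; covering means $AK=H$, so by Theorem \ref{th1} the chief factor $A/(A\cap K)$ is $G$-isomorphic to $H/K$. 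Because $A\leq N_1$ we get $A\cap N_2\leq N_1\cap N_2=1\leq A\cap K$, so $N_2$ avoids $A/(A\cap K)$; the avoidance computation now lands in $G/N_2\in\mathcal{C}(\mathrm{f})$, and property (1) equates $\mathrm{f}(H/K,G)=\mathrm{f}(A/(A\cap K),G)=1$. Thus in every case $\mathrm{f}(H/K,G)=1$, so $G\in\mathcal{C}(\mathrm{f})$, completing R$_0$-closure.

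The main technical point to get right is precisely this covering case: a chief factor covered by both $N_1$ and $N_2$ cannot be transported to either quotient as it stands, and the device of replacing it by the $G$-isomorphic copy $A/(A\cap K)$ sitting inside $N_1$ — which $N_2$ is then \emph{forced} to avoid because $N_1\cap N_2=1$ — is exactly what makes the argument close. Once the avoidance lemma and this replacement are in place, both closure properties follow, so $\mathcal{C}(\mathrm{f})$ is a formation.
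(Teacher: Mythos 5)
Your proof is correct, and it takes a genuinely different route from the paper's. The paper argues globally: it fixes a chief series of $G$ through $N$, shows via $H/K\mapsto HM/KM$ (using $H\cap M\leq N\cap M=1$) that every chief factor below $N$ is $G$-isomorphic to one above $M$, and then invokes the Jordan--H\"older theorem to conclude that an arbitrary chief factor of $G$ is $G$-isomorphic to one lying above $M$ or above $N$, after which property (2) of Definition \ref{def1} finishes. You instead treat each chief factor $H/K$ locally through the cover/avoid dichotomy: $(H\cap N_1)K/K$ is a $G$-invariant normal subgroup of the $G$-simple factor $H/K$, so $N_1$ either avoids $H/K$ (and the factor transports to $G/N_1$ via $HN_1/KN_1$) or covers it, in which case the $G$-isomorphic replacement $A/(A\cap K)$ with $A=H\cap N_1$ sits inside $N_1$ and is therefore forced to be avoided by $N_2$, transporting the value to $G/N_2$. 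Your version dispenses with the Jordan--H\"older theorem entirely, needing only the isomorphism theorems and $G$-simplicity of chief factors, and it makes fully explicit the step that the paper leaves somewhat compressed (``every chief factor is $G$-isomorphic to one above $M$ or $N$''); the paper's version is shorter on the page but pays for it by appealing to the uniqueness of chief series. One could quibble that in your avoidance computation you should note $H\cap KN=K(H\cap N)=K$ by Dedekind's law so that $HN/KN$ is a genuine (nontrivial) chief factor, but you effectively use exactly this identity, so the argument is sound as written.
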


\begin{proof} Let $G\in\mathcal{C}(\mathrm{f})$ and $N\trianglelefteq G$. Then if $(H/N)/(K/N)$ is a chief factor of $G/N$, then $H/K$ is a chief factor of $ G$ and $ \mathrm{f}((H/N)/(K/N), G/N)=\mathrm{f}(H/K, G)=1$ by $ (2)$ of Definition \ref{def1}. Hence $G/N\in \mathcal{C}(\mathrm{f})$. It means that $\mathcal{C}(\mathrm{f})$ is closed under taking homomorphic images.  Assume now $G/N, G/M\in\mathcal{C}(\mathrm{f})$ and $ M\cap N=1$.
Let $ H/K$ be a chief factor of $G$ below $N$. Then $$HM/KM\simeq H/(H\cap KM)=H/K(H\cap M)=H/K,$$  i.e. $H/K$ is $ G$-isomorphic to a chief factor of $G$ above $M$ by $(1)$ of The Isomorphism Theorems. From the Jordan-H\"older Theorem it follows that every chief factor of $G$ is $G$-isomorphic to a chief factor of $G$ above $M$ or $N$. WLOG let $H/K\simeq R/T$ and $ R/T$ is a chief factor of $G$ above $N$, then $\mathrm{f}(H/K, G)=\mathrm{f}(R/T, G)=\mathrm{f}((R/N)/(T/N), G/N)=1$ by Definition \ref{def1}. It means that $G\in\mathcal{C}(\mathrm{f})$. Hence $\mathcal{C}(\mathrm{f})$ is closed under taking subdirect products. It means that $ \mathcal{C}(\mathrm{f})$ is a formation.
\end{proof}

Recall that the smallest normal subgroup $H$ of $G$ such that $G/H$ is the direct product of simple (resp. simple non-abelian) subgroups of $G$ is called the (resp. non-abelian) residual of $G$ and is denoted by $\mathrm{Res}(G)$  (resp. $\mathrm{Res}_N(G)$). Here we are interested in the following subgroups. Let $M$ be a normal subgroup of a group $G$. Denote by $\mathrm{Res}_N(M, G)$ (resp. $\mathrm{Res}_p(M, G)$)   the smallest normal subgroup $H$ of $G$ below $M$ such that $M/H$ is the direct product of  minimal normal  non-abelian (resp. $p$-subgroups) subgroups    of $G/H$.

\begin{lem}\label{lem2}
  $\mathrm{Res}_N(N, G)$ is defined for every normal subgroup $N$ of $G$. Moreover $\mathrm{Res}_N(N, G)$ and a decomposition of $N/\mathrm{Res}_N(N, G)$ into the direct product of minimal normal subgroups of $G/\mathrm{Res}_N(N, G)$ can be computed in a polynomial time.
\end{lem}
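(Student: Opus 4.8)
The plan is to reduce the statement to the computation of a single characteristic subgroup of $N$. Let $\mathfrak{D}$ denote the class of all groups that are (possibly empty) direct products of non-abelian simple groups. First I would check that $\mathfrak{D}$ is a formation: a homomorphic image of a member is a subproduct of its simple factors, and if $A, B\trianglelefteq N$ with $A\cap B=1$ and $N/A, N/B\in\mathfrak{D}$, then $N$ embeds subdirectly into $N/A\times N/B$; writing each simple factor of $N/A$ and of $N/B$ as $N/M$ with $M\trianglelefteq N$ and $N/M$ non-abelian simple, the fact that two distinct such $M$ satisfy $M_1M_2=N$ (because $N/M_1$ is simple and $M_2\not\subseteq M_1$) shows that $N$ is the direct product of these simple quotients, so $N\in\mathfrak{D}$. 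Consequently the $\mathfrak{D}$-residual $N^{\mathfrak{D}}$ exists; being defined by a property invariant under $\mathrm{Aut}(N)$ it is characteristic in $N$, hence normal in $G$. Next I would observe that a $G$-normal section $N/H$ is a direct product of non-abelian minimal normal subgroups of $G/H$ exactly when $N/H\in\mathfrak{D}$: the simple components of such an $N/H$ are permuted by $G$, and the product over each $G$-orbit is a non-abelian minimal normal subgroup of $G/H$. Thus the admissible $H$ are precisely the $G$-normal subgroups of $N$ with $N/H\in\mathfrak{D}$, their least element is $N^{\mathfrak{D}}$, and $\mathrm{Res}_N(N,G)=N^{\mathfrak{D}}$ is well defined.

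It remains to compute $N^{\mathfrak{D}}$, equivalently the largest quotient of $N$ lying in $\mathfrak{D}$, together with a decomposition. Since a member of $\mathfrak{D}$ has trivial solvable radical and equals its own socle, every such quotient annihilates both the solvable radical of $N$ and every abelian chief factor; hence I would peel $N$ along its socle series. At a typical stage, having a group $X$ (presented as a section of $N$, so that products, orders, composition series, section-centralizers and joins are available by Theorem \ref{Basic}), I would compute $\mathrm{Soc}(X)$ and its decomposition into simple components, which are permuted by $X$. A non-abelian simple quotient of $X$ arises in exactly one of two ways: either it is a single $X$-invariant component $S$ on which $X$ induces only inner automorphisms --- detected by the test $X=S\,C_X(S)$ using Theorem \ref{Basic}(4) and orders --- or it has $\mathrm{Soc}(X)$ in its kernel and is therefore a non-abelian simple quotient of $X/\mathrm{Soc}(X)$. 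Recursing on $X/\mathrm{Soc}(X)$, collecting at every level the split single components, and intersecting the corresponding kernels, produces $N^{\mathfrak{D}}$; the collected components, grouped into the orbits of the conjugation action of a generating set of $G$, give the decomposition of $N/N^{\mathfrak{D}}$ into non-abelian minimal normal subgroups of $G/N^{\mathfrak{D}}$. Because the socle series of $N$ has length at most its composition length, and so is bounded by $\log_2|N|\le n\log_2 n$ (cf. also Lemma \ref{chain}), only polynomially many stages occur, each of polynomial cost.

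The main obstacle is the correctness and efficiency of this extraction of the largest $\mathfrak{D}$-quotient. The tempting shortcut --- that $N/N^{\mathfrak{D}}$ is assembled only from those socle components that split off as direct factors --- is false: for instance $A_5\wr A_5$ has its non-split base group $A_5^5$ as socle yet admits the simple quotient $A_5$ coming from the top, so simple factors genuinely appear above the socle and the layered recursion is unavoidable. The technical heart is therefore to prove that this recursion captures exactly the composition factors of $N^{\mathfrak{D}}$ and that the pieces it collects assemble into an honest (internal) direct product --- which rests on the independence of distinct non-abelian simple quotients established in the first step --- together with the polynomial-time computation, from the primitives of Theorem \ref{Basic}, of the socle and of the inner-automorphism test at each stage.
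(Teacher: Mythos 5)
Your identification of $\mathrm{Res}_N(N,G)$ with the $\mathfrak{D}$-residual $N^{\mathfrak{D}}$ of $N$ (the paper's $\mathrm{Res}_N(N)$), via the observation that a $G$-normal section lies in $\mathfrak{D}$ if and only if it is a direct product of non-abelian minimal normal subgroups of the corresponding quotient of $G$, is exactly the paper's first step, carried out somewhat more carefully. Where you genuinely diverge is in the computation. The paper simply invokes a known result (\cite[Theorem 8.3]{Babai1987}) to obtain $\mathrm{Res}_N(N)$ in polynomial time, and spends its effort on the decomposition, which it gets by a clean iteration fully covered by Theorem \ref{Basic}: find a simple subnormal subgroup $M/\mathrm{Res}_N(N)$ of $N/\mathrm{Res}_N(N)$ via a composition series, form its normal closure $(M/\mathrm{Res}_N(N))^G$ --- a minimal normal subgroup of $G/\mathrm{Res}_N(N)$ --- and pass to its centralizer as the complementary factor, the number of rounds being bounded by Lemma \ref{chain}. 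You instead rebuild the residual computation from scratch by peeling the socle series; your case analysis (a non-abelian simple quotient of $X$ either kills $\mathrm{Soc}(X)$ or equals $X/C_X(S)$ for an $X$-invariant component $S$ with $X=SC_X(S)$) is correct, but it rests on primitives the paper never establishes: computing $\mathrm{Soc}(X)$ together with its simple components for a section $X$ of a permutation group, and intersecting the resulting kernels (the centralizers collapse to the single subgroup $C_X(\langle S_1,\dots,S_m\rangle)$, but intersecting that with the preimage of $(X/\mathrm{Soc}(X))^{\mathfrak{D}}$ is a normal-subgroup intersection not provided by Theorem \ref{Basic}). These are indeed known polynomial-time results (essentially from \cite{Kantor1990a}), so your route can be made to work, but it re-proves one cited black box at the price of importing others. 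The weakest point is your final decomposition step: the components you collect live in different quotients $N/H_i$ along the socle series, and ``grouping them into $G$-orbits'' does not by itself yield generators, inside $G$, of the minimal normal subgroups of $G/N^{\mathfrak{D}}$; once $N^{\mathfrak{D}}$ is in hand you still need something like the paper's normal-closure-plus-centralizer iteration to exhibit the direct factors explicitly.
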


\begin{proof}
Note that $\mathrm{Res}_N(N)\textrm{ char }N\trianglelefteq G$. Hence $\mathrm{Res}_N(N)\trianglelefteq G$. Recall that $\mathrm{Res}_N(N)$ is the smallest normal subgroup of $N$ such that $N/\mathrm{Res}_N(N)$ is a direct product of simple non-abelian groups and every minimal normal non-abelian subgroup is a direct product of simple non-abelian groups.
 Therefore if $ \mathrm{Res}_N(N, G)$ exists, then  $\mathrm{Res}_N(N)$ contains it. Let prove that $N/\mathrm{Res}_N(N)$ is the direct product of minimal normal non-abelian subgroups of $G/\mathrm{Res}_N(N)$.

 Let $A=N$
and $M/\mathrm{Res}_N(N)$ be a simple subnormal subgroup of $A/\mathrm{Res}_N(N)$. Then $M/\mathrm{Res}_N(N)$ is a simple non-abelian subnormal subgroup of $G/\mathrm{Res}_N(N)$. Therefore \linebreak $(M/\mathrm{Res}_N(N))^G$ is a  minimal normal subgroup of $G/\mathrm{Res}_N(N)$ below $A/\mathrm{Res}_N(N)\leq N/\mathrm{Res}_N(N)$. Note that $$ A/\mathrm{Res}_N(N)=(M/\mathrm{Res}_N(N))^G\times C_{A/\mathrm{Res}_N(N)}((M/\mathrm{Res}_N(N))^G),$$  $C_{A/\mathrm{Res}_N(N)}((M/\mathrm{Res}_N(N))^G)\trianglelefteq G/\mathrm{Res}_N(N)$. So now we can let $$A/\mathrm{Res}_N(N)\leftarrow C_{A/\mathrm{Res}_N(N)}((M/\mathrm{Res}_N(N))^G).$$

 It means that using the previous steps we can decompose $N/\mathrm{Res}_N(N)$ into the direct product  of minimal normal  non-abelian subgroups of $G/\mathrm{Res}_N(N)$. Thus $\mathrm{Res}_N(N, G)=\mathrm{Res}_N(N)$.


\begin{algorithm}[H]
\caption{NonAbelianDecomposition($G, N$)}
\SetAlgoLined
\KwResult{The smallest normal subgroup $K$ of $G$ below $N$ such that $N/K\simeq \overline{N}_1\times\dots\times\overline{N}_k$ where  $\overline{N}_i$ is a minimal normal non-abelian subgroup of $G$;  subgroups $\overline{N}_i$. }
\KwData{$N$ is a normal subgroup of a group $G$}
$K\gets Res_N(N)$\;

 $A\gets N$\;

 $L\gets []$\;

\While{$|A|\neq |K|$}{
Find a minimal subnormal subgroup $B/K$ of $A/K$\;
Find $(B/K)^G$ and add this subgroup to $L$\;
$A/K\gets C_{A/K}((B/K)^G)$\;
}
 \end{algorithm}

According to \cite[Theorem 8.3]{Babai1987}  $\mathrm{Res}_N(N)$ can be found in polynomial time. By 2 of Theorem \ref{Basic} the minimal subnormal subgroup $M/\mathrm{Res}_N(N)$ of $N/\mathrm{Res}_N(N)$ can be found in a polynomial time. Then $(M/\mathrm{Res}_N(N))^G=M^G/\mathrm{Res}_N(N)$  can be computed in a polynomial time by 3 of Theorem \ref{Basic}. Now  $C_{A/\mathrm{Res}_N(N)}((M/\mathrm{Res}_N(N))^G)$  can be computed in a polynomial time by 4 of Theorem \ref{Basic}.
Thus Algorithm 1 runs in  a polynomial time by Lemma \ref{chain}.
\end{proof}

\begin{lem}\label{lem3}
Let $p$ be a prime.   $\mathrm{Res}_p(N, G)$ is defined for every normal subgroup $N$ of $G$. Moreover $\mathrm{Res}_p(N, G)$ and a decomposition of $N/\mathrm{Res}_p(N, G)$ into the direct product of minimal normal subgroups of $G/\mathrm{Res}_p(N, G)$ can be computed in a polynomial time.
\end{lem}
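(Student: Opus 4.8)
The plan is to reduce the whole statement to a computation with $\mathbb{F}_pG$-modules and then to isolate the one genuinely non-elementary ingredient. First I would pin down $\mathrm{Res}_p(N,G)$ group-theoretically. Put $H_1:=N'N^p$. Since $N'$ and $N^p$ are characteristic in $N\trianglelefteq G$, so is $H_1$, whence $H_1\trianglelefteq G$ and $H_1\leq N$. Any normal subgroup $H$ of $G$ with $N/H$ an elementary abelian $p$-group must contain $N'$ and $N^p$, so $H\geq H_1$; conversely $N/H_1$ is the largest elementary abelian $p$-quotient of $N$ (indeed $H_1/N'=p\,(N/N')$, so $N/H_1\cong (N/N')\otimes\mathbb{F}_p$). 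As every minimal normal $p$-subgroup of a quotient is elementary abelian, every $H$ admissible in the definition of $\mathrm{Res}_p(N,G)$ satisfies $H\geq H_1$. By Lemma \ref{transform} the group $V:=N/H_1$ is an $\mathbb{F}_pG$-module, and (because $V$ is abelian) its $G$-submodules are exactly the normal subgroups of $G$ lying between $H_1$ and $N$. Under this correspondence $N/H$ is a direct product of minimal normal $p$-subgroups of $G/H$ if and only if the module $V/(H/H_1)$ is completely reducible, the minimal normal subgroups being precisely its irreducible summands. The submodules $W\leq V$ with $V/W$ completely reducible are closed under intersection, so a smallest one exists, namely the radical $\mathrm{Rad}(V)$ (the head $V/\mathrm{Rad}(V)$ is the largest semisimple quotient). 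Hence $\mathrm{Res}_p(N,G)$ is defined and equals the preimage in $N$ of $\mathrm{Rad}(V)$.

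Next I would make this effective. Compute $N'=[N,N]$ by Theorem \ref{Basic}(6) and then $H_1=N'N^p$ by working in the abelian group $N/N'$. Since $|V|\leq|G|\leq n!$, we have $\dim_{\mathbb{F}_p}V=O(n\log n)$, so $V$ is of polynomial dimension; by Lemma \ref{transform} the matrices of the generators of $G$ acting on $V$ are obtained in polynomial time. Forming the enveloping algebra $A\leq \mathrm{End}_{\mathbb{F}_p}(V)$ by closing the generator matrices under multiplication (a spinning computation; $\dim A\leq(\dim V)^2$) and computing its Jacobson radical $J(A)$ by the deterministic polynomial-time structure theory of finite-dimensional algebras over finite fields, I obtain $\mathrm{Rad}(V)=J(A)V$ and therefore $\mathrm{Res}_p(N,G)$ as its preimage in $N$.

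Finally I would produce the decomposition. Write $\bar G=G/\mathrm{Res}_p(N,G)$ and $\bar V=N/\mathrm{Res}_p(N,G)$, which is now completely reducible. Iterating, while $\bar V\neq 1$ I would take a minimal normal $p$-subgroup $L$ of $\bar G$ inside $\bar V$ as the bottom nontrivial term of a $G$-composition series through the current section (Theorem \ref{Basic}(2)); since $\bar V$ is semisimple, $L$ splits off, and a $G$-invariant complement $C$ is found by linear algebra as the kernel of a $G$-equivariant idempotent $\bar V\to\bar V$ with image $L$ (solving the linear system expressing commutation with the generator matrices). Recording $L$ and replacing $\bar V$ by $C$, the loop terminates after at most $\dim\bar V$ steps (cf. Lemma \ref{chain}), each polynomial, and the recorded subgroups exhibit $N/\mathrm{Res}_p(N,G)$ as an internal direct product of minimal normal subgroups of $\bar G$.

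The group-theoretic reduction of the first paragraph and the splitting of the third are routine given the tools already available; the one step that is not elementary, and which I expect to be the main obstacle, is the polynomial-time computation of $\mathrm{Rad}(V)$ (equivalently of $J(A)$), for which one must appeal to the structure theory of algebras over finite fields rather than to Theorem \ref{Basic} alone.
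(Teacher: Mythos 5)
Your proposal is correct, and for the heart of the lemma it coincides with the paper's argument: both reduce to the Frattini-type quotient $V=N/N'N^p$ viewed as an $\mathbb{F}_pG$-module, identify $\mathrm{Res}_p(N,G)$ with the preimage of $\mathrm{Rad}(V)$ (the paper via ``the radical is the intersection of the maximal submodules'', you via closure of the family of submodules with semisimple quotient under intersection --- equivalent arguments), and compute $\mathrm{Rad}(V)=J(R)V$ by building the enveloping matrix algebra and invoking R\'onyai's polynomial-time computation of the Jacobson radical over a finite field; you correctly single this out as the only non-elementary ingredient. Where you genuinely diverge is the final decomposition of $N/\mathrm{Res}_p(N,G)$ into minimal normal subgroups: the paper applies R\'onyai a second time (Corollary 3.2 of that paper) to split the now-semisimple algebra $R$ into minimal ideals $R_i$ and takes $(N/K)R_i$ as the irreducible summands, whereas you iteratively peel off one minimal normal subgroup and compute a $G$-invariant complement by linear algebra. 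Your route buys independence from the second appeal to R\'onyai, at the cost of two small points you should tighten: (i) the condition ``$e$ is an idempotent'' is not linear, so phrase the complement computation as solving the linear system for a $G$-equivariant map $f:\bar V\to L$ with $f|_L=\mathrm{id}_L$ (or as splitting the quotient map $\bar V\to\bar V/L$), which is linear; and (ii) extracting a minimal \emph{normal} subgroup of $\bar G$ inside $\bar V$ requires a chief series ($G$-composition series), not the plain composition series literally promised by Theorem \ref{Basic}(2) --- note that in a semisimple module the $G$-closure of an order-$p$ subgroup need not be irreducible, so the paper's trick from Lemma \ref{lem2} (normal closure of a minimal subnormal subgroup) does not transfer verbatim to the abelian case; computing a chief series is still standard polynomial-time machinery, so this is a citation issue rather than a mathematical gap.
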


\begin{proof}
  Let $N/K$ be the direct product of minimal normal  $p$-subgroups of $G/K$  for a given $p$ where $N, K\trianglelefteq G$. Note that in this case $N/K$ is the elementary abelian $p$-group.
  It means that $N'N^p\subseteq K$. So if $N'N^p=N$, then we can let $\mathrm{Res}_p(N, G)=N'N^p$. Assume that $N\neq N'N^p$. Note that $N'N^p\textrm{ char }N\trianglelefteq G$. Hence $N'N^p\trianglelefteq G$.   Then we can consider $V=N/N'N^p$ as an $\mathbb{F}_pG$-module.

In this case normal subgroups of $G/N'N^p$ below $N/N'N^p$ are in the one to one correspondence with submodules of $V$. Let $K/N'N^p$ be the   radical $\mathrm{Rad}(N/N'N^p)$ of $N/N'N^p$. Now $N/K\simeq (N/N'N^p)/\mathrm{Rad}(N/N'N^p)$ is a semisimple $ \mathbb{F}_pG$-module, i.e. $N/K$ is the direct product of minimal normal subgroups of $ G/K$.

Assume that $ K_1$ is a normal subgroup of $ G$ such that $N/K_1=N_1/K_1\times\dots\times N_k/K_1$ is   a direct product of minimal normal $ p$-subgroups $ N_i/K_1$ of $ G/K_1$. It is clear that $ N'N^p\subseteq K_1$. Note that $ N/(\prod_{i=1,i\neq j}^n N_i)$ is a chief factor of $G$. It means that $\prod_{i=1,i\neq j}^n N_i/N'N^p$ is a maximal submodule of $V$. Recall that the   radical of a module is the intersection of all its maximal submodules. Now $$K/N'N^p=\mathrm{Rad}(N/N'N^p)\subseteq\bigcap_{j=1}^n(\prod_{i=1,i\neq j}^n N_i/N'N^p)=K_1/N'N^p. $$ Thus $K\subseteq K_1$. It means that $K$ is the smallest normal subgroup   $G$ below $N$  such that $N/K$ is the direct product of minimal normal $p$-subgroups  of $G/K$. Hence $K=\mathrm{Res}_p(N, G)$ is well defined.

Let show that $K$ can be computed in polynomial time. If $N=\langle S\rangle$, then $N'N^p=\langle\{[x,y]\mid x,y\in S\}\cup\{x^p\mid x\in S\}\rangle$ can be computed in polynomial time. Every generator of $G$ induces by conjugation a linear transformation of $N/N'N^p$. The matrix of this transformation can be found in a polynomial time by Lemma \ref{transform}. Denote the algebra generated by these matrixes by $R$. Then the basis of its Jacobson radical $J(R)$ can be computed in a polynomial time by \cite[Theorem 2.7]{Ronyai1990}. Now $\mathrm{Rad}(N/N'N^p)=(N/N'N^p)J(R)$ by  \cite[B, Proposition 4.2]{Doerk1992}. Hence $\mathrm{Rad}(N/N'N^p)$ is generated (as a module and as a subgroup) by  products $ nr$ where $n$ is a generator $N/N'N^p$ and $r$ is a generator of $J(R)$. Thus $\mathrm{Rad}(N/N'N^p)$ can be computed in a polynomial time, i.e. $\mathrm{Rad}(N/N'N^p)=K/N'N^p$ and we know generators of $K$ as a subgroup of $G$.

Since every generator of $G$ induces by conjugation a linear transformation of $N/K$, the matrix of this transformation can be found in a polynomial time by Lemma \ref{transform}.
Denote the algebra generated by these matrixes by $R$.
Note that $N/K$ is a semisimple $\mathbb{F}_pG$-module. Hence it is a semisimple $R$-module. Now $nr=0$ for every $n\in N/K$ and $r\in J(R)$. Since $R$ acts on $N/K$ by matrix multiplications, we see that $J(R)=0$. Thus $R$ is semisimple. Then bases of minimal ideals $R_i$  of $R$ can be found in polynomial time by  \cite[Corollary 3.2]{Ronyai1990}. Then $(N/K)R_i$ is a minimal submodule of $N/K$ and the sum of this submodules is $N/K$  by \cite[VII, Theorem 12.1]{Huppert1982}. It is clear that generating sets of these submodules (and hence corresponding to them quotient groups) can be found in a polynomial time.


\begin{algorithm}[H]
\caption{PDecomposition($G, N, p$)}
\SetAlgoLined
\KwResult{The smallest normal subgroup $K$ of $G$ below $N$ such that $N/K\simeq \overline{N}_1\times\dots\times\overline{N}_k$ where  $\overline{N}_i$ is a minimal normal $p$-subgroup of $G$;  subgroups $\overline{N}_i$. }
\KwData{$N$ is a normal subgroup of a group $G$ and $ p$ is a prime}
$M\gets[]$\;
$ L\gets[]$\;
\If{$|N^pN'|=|N|$}{output $ N^pN'$ and $L$\;}
\Else{For each generator $ g$ of $G$ find the linear transformation which this element induces on $N/N^pN'$\;
For the algebra generated by above mentioned transformations $R$ find the basis of   $\mathrm{J}(B)$\; 
Find the generators of $K$ where $K/N'N^p=(N/N'N^p)J(R)$\;
For each generator $ g$ of $G$ find the linear transformation which this element induces on $N/K$\;
Decompose the algebra $R$ generated by above mentioned transformations  into the sum $\rho_1\oplus\dots\oplus\rho_k$  of minimal left ideals\;
For each ideal $ \rho_i$ find $(N/K)\rho_i$ and add it to $ M$\;
For each element in $M$ find its generators in $G$ and add them as an element to $L$\;}
 \end{algorithm}
\end{proof}


\begin{lem}\label{lem4}
    Let $\mathfrak{F}=\mathcal{C}(\mathrm{f})$, $N$ and $K$ be  normal subgroups of $G$ such that $N/K=N_1/K\times\dots\times N_t/K$ where $N_i/K$ is a minimal normal subgroup of $G$ and $G/N\in\mathfrak{F}$. Then $ (G/K)^\mathfrak{F}$ can be found in polynomial time.

  \end{lem}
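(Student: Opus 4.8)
The plan is to reduce to the quotient $\bar G = G/K$ and give an explicit formula for the residual in terms of the given direct decomposition. Write $\bar N = N/K = \bar N_1 \times \cdots \times \bar N_t$, where $\bar N_i = N_i/K$ is a minimal normal subgroup of $\bar G$, and put $B = \{\, i : \mathrm{f}(\bar N_i, \bar G) = 0 \,\}$. Since $\bar G/\bar N \simeq G/N \in \mathfrak{F}$ by Theorem \ref{th1}$(2)$, we already know $(G/K)^\mathfrak{F} \le \bar N$. I claim that
\[
(G/K)^\mathfrak{F} = \prod_{i \in B} \bar N_i = \Big(\prod_{i\in B} N_i\Big)\big/K .
\]

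First I would check that $\bar G/\bar R \in \mathfrak{F}$ for $\bar R = \prod_{i\in B}\bar N_i$. A chief factor of $\bar G/\bar R$ lying above $\bar N/\bar R$ corresponds, by condition $(2)$ of Definition \ref{def1}, to a chief factor of $(\bar G/\bar R)/(\bar N/\bar R) \simeq \bar G/\bar N \simeq G/N \in \mathfrak{F}$ carrying the same $\mathrm{f}$-value, hence it is good. A chief factor of $\bar G/\bar R$ lying inside $\bar N/\bar R \simeq \prod_{i\notin B}\bar N_i$ is, by the Jordan--H\"older Theorem \ref{th2}, $\bar G$-isomorphic to some $\bar N_i\bar R/\bar R$ with $i\notin B$; since $\bar N_i \cap \bar R = 1$, Theorem \ref{th1}$(1)$ gives a $\bar G$-isomorphism $\bar N_i\bar R/\bar R \simeq \bar N_i$, and then conditions $(1)$ and $(2)$ of Definition \ref{def1} yield $\mathrm{f}(\bar N_i\bar R/\bar R, \bar G/\bar R) = \mathrm{f}(\bar N_i, \bar G) = 1$. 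Thus all chief factors of $\bar G/\bar R$ are good, so $\bar G/\bar R\in\mathfrak{F}$ and $(G/K)^\mathfrak{F} \le \bar R$.

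For the reverse inclusion I would show that $\bar R$ is contained in every normal subgroup $\bar S$ of $\bar G$ with $\bar G/\bar S \in \mathfrak{F}$. Fix $i\in B$ and suppose $\bar N_i \not\le \bar S$. As $\bar N_i$ is minimal normal and $\bar N_i \cap \bar S \trianglelefteq \bar G$, we get $\bar N_i\cap\bar S = 1$, so $\bar N_i\bar S/\bar S$ is a chief factor of $\bar G/\bar S$ that is $\bar G$-isomorphic to $\bar N_i$; exactly as above this forces $\mathrm{f}(\bar N_i,\bar G) = 1$, contradicting $i\in B$. Hence $\bar N_i \le \bar S$ for all $i\in B$, so $\bar R \le \bar S$, and taking $\bar S = (G/K)^\mathfrak{F}$ proves $(G/K)^\mathfrak{F} = \bar R$. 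For the complexity, the decomposition $\bar N = \bar N_1\times\cdots\times\bar N_t$ is already supplied (by Lemma \ref{lem2} or Lemma \ref{lem3}), the number of factors satisfies $t \le \log_2|G|$, and each value $\mathrm{f}(\bar N_i,\bar G)$ is computable in polynomial time by the standing hypothesis on $\mathrm{f}$; so determining $B$ and forming $\prod_{i\in B}\bar N_i$ via Theorem \ref{Basic}$(6)$ runs in polynomial time.

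The main obstacle is not the algorithmics but the bookkeeping in the two invariance arguments: one must be sure that replacing $\bar N_i$ by $\bar N_i\bar R/\bar R$ (respectively $\bar N_i\bar S/\bar S$) alters neither the $\bar G$-isomorphism type of the chief factor nor the ambient quotient in a way that $\mathrm{f}$ could detect. This is exactly what conditions $(1)$ and $(2)$ of Definition \ref{def1} guarantee, and together with the Jordan--H\"older Theorem they collapse the whole problem to evaluating $\mathrm{f}$ on the finitely many given minimal normal subgroups $\bar N_i$.
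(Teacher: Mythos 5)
Your proposal is correct and follows essentially the same route as the paper's proof: both identify the index set where $\mathrm{f}$ vanishes, take the product of the corresponding minimal normal subgroups as the candidate residual, verify membership of the quotient in $\mathfrak{F}$ via the Jordan--H\"older theorem together with conditions $(1)$ and $(2)$ of Definition \ref{def1}, and establish minimality by showing that any smaller normal subgroup with quotient in $\mathfrak{F}$ would force a ``bad'' factor $\bar N_i$ to have $\mathrm{f}$-value $1$. The only cosmetic differences are that you work inside $\bar G=G/K$ throughout and phrase minimality for an arbitrary normal subgroup $\bar S$ rather than directly for $(G/K)^{\mathfrak{F}}$.
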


\begin{proof}
  Let $$I^+=\{i\mid \mathrm{f}(N_i/K, G)=1\}, I^-=\{i\mid \mathrm{f}(N_i/K, G)=0\} \textrm{ and } M/K=\prod_{i\in I^-}  N_i/K.$$
We claim that $M/K=(G/K)^\mathfrak{F}$.  Note that every chief factor $H/T$  of $G$ between $M$ and $N$ is $G$-isomorphic to $N_i/K$ for  some $i\in I^+$.  Hence   $$\mathrm{f}((H/M)/(T/M), G/M)=\mathrm{f}(H/T, G)=\mathrm{f}(N_i/K, G)=1.$$ Since $G/N\simeq (G/M)/(N/M)\in\mathcal{C}(\mathrm{f})$, we see that $\mathrm{f}((H/M)/(T/M), G/M)=1$ for every chief factor   $(H/M)/(T/M)$  of $G/M$ above $N/M$. From the Jordan-H\"older theorem it follows that $(G/K)/(M/K)\simeq G/M\in \mathcal{C}(\mathrm{f})=\mathfrak{F}$. Hence $(G/K)^\mathfrak{F}\leq M/K$.

Assume that $F/K=(G/K)^\mathfrak{F}< M/K$. So $I^-\neq\emptyset$. Then $F/K< FN_i/K$ for some   $i\in I^-$, i.e. $ F\cap N_i= K$.   Hence $FN_i/F$ and $N_i/K$ are $G$-isomorphic chief factors of $G$.     Thus \begin{multline*}
  1=\mathrm{f}(((FN_i/K)/(F/K))/((F/K)/(F/K)), (G/K)/(F/K))=\mathrm{f}((FN_i/K)/(F/K), G/K)=\\
  \mathrm{f}(FN_i/F, G)=\mathrm{f}(N_i/K, G)=0,
\end{multline*} a contradiction. Thus $(G/K)^\mathfrak{F}= M/K$.

\begin{algorithm}[H]
\caption{FResidualPart($G, N, K, L, \mathrm{f}$)}
\SetAlgoLined
\KwResult{$T/K=(G/K)^\mathfrak{F}$. }
\KwData{$N\trianglelefteq G$ with $G/N\in\mathfrak{F}$, $K\trianglelefteq G$ with $N/K=N_1/K\times\dots\times N_t/K$; the list $L$ of minimal normal subgroups $N_i/K$ of $G/K$.}
$T\gets K$\;
\For{$i$ in $[1,..., t]$}
{\If{$\mathrm{f}(N_i/K, G)=0$}{$T\gets\langle T, N_i\rangle$}}
 \end{algorithm}

Since $\mathrm{f}(H/K, G)$ can be computed in a polynomial time for every chief factor $H/K$ of $G$, we see that      $I^-$ can be computed in a polynomial time. Note that $t<2n$ by Lemma \ref{chain}. Hence the join of  not more than $t$ subgroups can be computed in a polynomial time.
\end{proof}

\begin{lem}\label{lem7}
  Let $\mathfrak{F}=\mathcal{C}(\mathrm{f})$ and $G$ be a group. Then $G^\mathfrak{F}$ can be computed in a polynomial time.
\end{lem}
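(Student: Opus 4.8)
The plan is to compute $G^{\mathfrak{F}}$ by a top-down process that repeatedly peels off the ``semisimple head'' of the current frontier and applies Lemma \ref{lem4}. Throughout I would maintain a normal subgroup $R\trianglelefteq G$ satisfying the invariant $G/R\in\mathfrak{F}$ and $G^{\mathfrak{F}}\le R$, starting from $R=G$ (for which $G/G=1\in\mathfrak{F}$). The aim is to push $R$ down to $G^{\mathfrak{F}}$ in polynomially many steps, each step being a single application of the decomposition lemmas followed by Lemma \ref{lem4}.

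For the peeling step, given the current $R$, I would first compute $B:=\mathrm{Res}_N(R,G)\cap\bigcap_{p\mid|R|}\mathrm{Res}_p(R,G)$, using Lemma \ref{lem2} once for the non-abelian type and Lemma \ref{lem3} once for each prime dividing $|R|$. A maximal $G$-invariant normal subgroup of $R$ has chief quotient either non-abelian or $p$-elementary abelian, so $\mathrm{Res}_N(R,G)$ and $\mathrm{Res}_p(R,G)$ are exactly the intersections of the maximal $G$-invariant normal subgroups of each type; hence $B$ is the intersection of all of them and $R/B$ is the largest semisimple $G$-quotient of $R$, again a direct product of minimal normal subgroups of $G/B$. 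Concatenating the decompositions produced by Lemmas \ref{lem2} and \ref{lem3} (and using Theorem \ref{th2} to see that factors of different types do not interfere) yields, in polynomial time, the list $L$ of minimal normal subgroups of $G/B$ whose product is $R/B$.

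Now I would apply Lemma \ref{lem4} with this $R$, $B$ and $L$; its hypotheses hold since $G/R\in\mathfrak{F}$ and $R/B$ is a product of minimal normal subgroups of $G/B$. This returns $(G/B)^{\mathfrak{F}}=M/B$ in polynomial time, and since the image of an $\mathfrak{F}$-residual under a quotient map is the residual of the quotient we have $M=G^{\mathfrak{F}}B$; in particular $M\le R$, $M\trianglelefteq G$ and $G/M\cong(G/B)/(M/B)\in\mathfrak{F}$, so the invariant is preserved when $R$ is replaced by $M$. If $M=R$ I stop and output $R$; otherwise I set $R:=M$ and repeat. Each non-terminating iteration strictly decreases $R$ (and $B<R$ whenever $R\ne1$), so by Lemma \ref{chain} the number of iterations, and hence the whole computation, is polynomial in $n$; the per-iteration cost is polynomial by Lemmas \ref{lem2}, \ref{lem3}, \ref{lem4} and Theorem \ref{Basic}.

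The crux is the correctness of the stopping rule, namely that $M=R$ forces $R=G^{\mathfrak{F}}$. Here I would invoke the non-generator property of $B$: if $G^{\mathfrak{F}}<R$, then being a proper $G$-invariant normal subgroup of $R$ it lies in some maximal $G$-invariant normal subgroup $L<R$; but $B$, as the intersection of all such maximal subgroups, also lies in $L$, so $M=G^{\mathfrak{F}}B\le L<R$, contradicting $M=R$. Together with the invariant $G^{\mathfrak{F}}\le R$ this gives $R=G^{\mathfrak{F}}$ exactly when the process halts. The main technical point to pin down is precisely the identification of $R/B$ as the full semisimple head and the extraction of its decomposition from the single-type outputs of Lemmas \ref{lem2} and \ref{lem3}; everything else is routine bookkeeping with the polynomial-time primitives of Theorem \ref{Basic}.
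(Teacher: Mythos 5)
Your proposal is correct and follows essentially the same route as the paper: iterate top-down while maintaining $G/R\in\mathfrak{F}$, use Lemmas \ref{lem2} and \ref{lem3} to expose the minimal normal subgroups at the top of the current frontier, apply Lemma \ref{lem4}, and bound the number of rounds by Lemma \ref{chain}. The only difference is organizational: the paper invokes Lemma \ref{lem4} once per type (the non-abelian head, then the $p$-head for each prime $p$) instead of intersecting all the residuals into a single $B$, which sidesteps exactly the step you flag as delicate, namely extracting genuine minimal normal subgroups of $G/B$ from the single-type decompositions.
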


\begin{proof}
  Note that $G/G\in\mathcal{C}(\mathrm{f})$. Assume that we have a subgroup $H$ with $G/H\in \mathcal{C}(\mathrm{f})$. Then $G^\mathfrak{F}\subseteq H$. If $G^\mathfrak{F}\neq H$, then there is a chief factor $H/K$ of $G$ with $\mathrm{f}(H/K, G)=1$. Note that $H/K$ is either a non-abelian or an abelian $p$-group. Hence $H/K$ is $G$-isomorphic to a chief factor of $G$ between $\mathrm{Res}_N(H, G)$ and $H$ or   between $\mathrm{Res}_p(H, G)$ and $H$ for some $p$.

\begin{algorithm}[H]
\caption{FResidual($G, \mathrm{f}$)}
\SetAlgoLined
\KwResult{$N=G^\mathfrak{F}$. }
\KwData{$\mathfrak{F}=\mathcal{C}(\mathrm{f})$, $G$ is a group.}

$K\gets G$\;
 \Repeat{$|N|\neq|K|$}{
 $N\gets K$\;
 $K\gets$FResidualPart($G$, $K$, NonAbelianDecomposition($G, K$), $\mathrm{f}$)\;
 $\pi\gets\pi(K)$\;
\For{$p$ in $\pi$}
{$K\gets$FResidualPart($G$, $K$, PDecomposition($G, K, p$), $\mathrm{f}$)\;}
}
 \end{algorithm}
From Lemmas \ref{lem2}--\ref{lem7} it follows that this is a polynomial time algorithm.
\end{proof}

\begin{lem}
  Let $\mathfrak{F}=\mathcal{C}(\mathrm{f})$. Then $\mathfrak{F}$ is $P$-recognizable.
\end{lem}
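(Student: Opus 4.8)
The plan is to reduce the membership test for the quotient $G/K$ to the residual computation already established in Lemma \ref{lem7}. Since the first lemma of this section shows that $\mathfrak{F}=\mathcal{C}(\mathrm{f})$ is a formation, the identity $\mathfrak{F}=(G\mid G^\mathfrak{F}=1)$ holds, so a group lies in $\mathfrak{F}$ exactly when its $\mathfrak{F}$-residual is trivial. The task is therefore to decide, for $K\trianglelefteq G\leq S_n$ given by generators, whether $(G/K)^\mathfrak{F}=1$.

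First I would record the standard formation identity $(G/K)^\mathfrak{F}=G^\mathfrak{F}K/K$, valid for every normal subgroup $K$ of $G$. One inclusion holds because $(G/K)/(G^\mathfrak{F}K/K)\cong G/G^\mathfrak{F}K$ is an epimorphic image of $G/G^\mathfrak{F}\in\mathfrak{F}$, and $\mathfrak{F}$ is closed under homomorphic images; the reverse inclusion holds because if $L/K=(G/K)^\mathfrak{F}$ then $G/L\cong (G/K)/(L/K)\in\mathfrak{F}$ forces $G^\mathfrak{F}\leq L$, whence $G^\mathfrak{F}K\leq L$ as $K\leq L$. Combining these gives
\[
G/K\in\mathfrak{F}\iff (G/K)^\mathfrak{F}=1\iff G^\mathfrak{F}\leq K.
\]

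With this reduction the algorithm is immediate. Given $K\trianglelefteq G\leq S_n$, I would first compute $G^\mathfrak{F}$ in polynomial time by Lemma \ref{lem7} applied to the permutation group $G$, obtaining a generating set for $G^\mathfrak{F}\leq S_n$. I would then test the containment $G^\mathfrak{F}\leq K$ by checking, via Sims' membership algorithm, that each generator of $G^\mathfrak{F}$ lies in $K$; this is a polynomial-time operation in $S_n$. The procedure reports $G/K\in\mathfrak{F}$ precisely when all these membership tests succeed.

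The only point requiring care is the reduction itself: one must verify that the closure properties of $\mathfrak{F}$ proved in the first lemma genuinely yield the residual identity above, so that it suffices to compute the residual of the permutation group $G$ and perform a containment test, rather than run the residual machinery inside the quotient $G/K$. Once this is in place, no new computational obstacle arises, since both the residual computation (Lemma \ref{lem7}) and the containment test (the standard permutation-group toolkit of Theorem \ref{Basic} together with Sims' algorithm) run in polynomial time.
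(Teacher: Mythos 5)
Your proposal is correct and follows essentially the same route as the paper: compute $G^\mathfrak{F}$ by Lemma \ref{lem7}, invoke the identity $(G/K)^\mathfrak{F}=G^\mathfrak{F}K/K$ to reduce membership of $G/K$ in $\mathfrak{F}$ to the containment $G^\mathfrak{F}\leq K$, and test that containment in polynomial time. The only (immaterial) difference is that the paper verifies the containment by checking $|\langle G^\mathfrak{F},K\rangle|=|K|$ via Theorem \ref{Basic}, whereas you use Sims' membership test on the generators of $G^\mathfrak{F}$; both are standard polynomial-time routines.
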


\begin{proof}
  Let $G$ be a group. Then $G^\mathfrak{F}$ can be computed in  a polynomial time. Recall that $(G/K)^\mathfrak{F}=G^\mathfrak{F}K/K$. Hence $G/K\in\mathfrak{F}$ iff $G^\mathfrak{F}\subseteq K$ iff $\langle G^\mathfrak{F}, K\rangle=K$ iff  $|\langle G^\mathfrak{F}, K\rangle|=|K|$. The last condition can be checked in  polynomial time by 1 and 6 of Theorem \ref{Basic}.
\end{proof}

\section{Proof of Theorem \ref{Fsubnorm}}

Let $H$ be a $K$-$\mathfrak{F}$-subnormal subgroup of  $G$. From the definition of $K$-$\mathfrak{F}$-subnormal subgroup it follows that   either $G=H$ or there is a proper subgroup $M$ of $G$ with $H$ is a $K$-$\mathfrak{F}$-subnormal subgroup of  $M$ and $M\trianglelefteq G$ or $G^\mathfrak{F}\leq M$.

\begin{algorithm}[H]
\caption{ISKFSUBNORMAL$(G, H, \mathfrak{F})$}
\SetAlgoLined
\KwResult{True if $H$ is $K$-$\mathfrak{F}$-subnormal in $G$ and False otherwise.}
\KwData{A subgroup $H$ of a  group $G$.}
\eIf{$|G|=|H|$}{{\bf return} True;}
{\eIf{$|HG^\mathfrak{F}|\neq|G|$}{{\bf return}  ISKFSUBNORMAL$(HG^\mathfrak{F}, H, \mathfrak{F})$;}
{\If{$|H^G|\neq|G|$}{{\bf return}  ISKFSUBNORMAL$(H^G, H, \mathfrak{F})$;}}{{\bf return} False;}}
 \end{algorithm}

Since $G^\mathfrak{F}$ can be computed in a polynomial time and according to 1 and 3 of Theorem \ref{Basic}, we see that every above mentioned check can be made in a polynomial time. Now the statement of theorem follows from the fact that every chain of subgroups of $G$ has at most $2n$  members by Lemma \ref{chain}.

By analogy one can prove that the following algorithm tests $\mathfrak{F}$-subnormality in a polynomial time.

\begin{algorithm}[H]
\caption{ISFSUBNORMAL$(G, H, \mathfrak{F})$}
\SetAlgoLined
\KwResult{True if $H$ is $\mathfrak{F}$-subnormal in $G$ and False otherwise.}
\KwData{A subgroup $H$ of a  group $G$.}
\eIf{$|G|=|H|$}{{\bf return} True;}
{\eIf{$|HG^\mathfrak{F}|\neq|G|$}{{\bf return}  ISFSUBNORMAL$(HG^\mathfrak{F}, H, \mathfrak{F})$;}
{{\bf return} False;}}
 \end{algorithm}

\section{Applications}

\subsection{Local and Baer-local formations}

Recall \cite[IV, Definitions 3.1]{Doerk1992} that a function $f$ which assigns a formation to each prime    is called a \emph{formation function}; a chief factor $H/K$ of a group $G$ is called \emph{$f$-central} if $G/C_G(H/K)\in f(p)$ for all prime divisors of $|H/K|$; a formation $\mathfrak{F}$ is called \emph{local} if its coincides with the class of groups all whose chief factors are $f$-central for some formation function $f$. In this case $f$ is called a local definition of $\mathfrak{F}$.

\begin{thm}\label{local}
  Let $f_\mathfrak{F}$ be a local definition of a local formation $\mathfrak{F}$. Assume that $G^{f_\mathfrak{F}(p)}$ can be computed in a polynomial time for every prime $p$ and every group $G$. Then $\mathfrak{F}$ is $P$-recognizable and $G^\mathfrak{F}$ can be computed in a polynomial time.
\end{thm}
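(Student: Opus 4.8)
The plan is to realize the local formation $\mathfrak{F}$ as $\mathcal{C}(\mathrm{f})$ for a suitable chief factor function $\mathrm{f}$ and then invoke Theorem \ref{residual}. I would define $\mathrm{f}$ by setting $\mathrm{f}(H/K, G)=1$ precisely when the chief factor $H/K$ is $f_\mathfrak{F}$-central, that is, when $G/C_G(H/K)\in f_\mathfrak{F}(p)$ for every prime $p$ dividing $|H/K|$. By the very definition of a local formation one then has $\mathcal{C}(\mathrm{f})=\mathfrak{F}$, so it remains to check that $\mathrm{f}$ is a chief factor function in the sense of Definition \ref{def1} and that it is computable in polynomial time.

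First I would verify the two axioms of Definition \ref{def1}. Condition (1) is immediate: $G$-isomorphic chief factors $H/K$ and $M/N$ satisfy $|H/K|=|M/N|$, so they share the same prime divisors, and $C_G(H/K)=C_G(M/N)$, whence the $f_\mathfrak{F}$-centrality tests coincide. Condition (2) follows from the standard identity $C_{G/N}((H/N)/(K/N))=C_G(H/K)/N$, valid for $N\trianglelefteq G$ with $N\leq K$, which gives $(G/N)/C_{G/N}((H/N)/(K/N))\simeq G/C_G(H/K)$, together with the equality of orders $|(H/N)/(K/N)|=|H/K|$; hence the membership conditions defining $\mathrm{f}$ agree on the two sides.

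The core of the argument is the polynomial-time evaluation of $\mathrm{f}(H/K, G)$ for $G\leq S_n$. The prime divisors of $|H/K|$ all lie among the primes not exceeding $n$, and since $|H|$ and $|K|$ (hence $|H/K|$) are found in polynomial time by part 1 of Theorem \ref{Basic}, trial division by the at most $n$ candidate primes isolates them in polynomial time. The centralizer $C_G(H/K)$ is computed in polynomial time by part 4 of Theorem \ref{Basic} (mod CFSG). For each relevant prime $p$, rather than forming the quotient $G/C_G(H/K)$, I would exploit the residual identity $(G/C_G(H/K))^{f_\mathfrak{F}(p)}=G^{f_\mathfrak{F}(p)}C_G(H/K)/C_G(H/K)$, so that $G/C_G(H/K)\in f_\mathfrak{F}(p)$ holds if and only if $G^{f_\mathfrak{F}(p)}\subseteq C_G(H/K)$. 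By hypothesis $G^{f_\mathfrak{F}(p)}$ is computable in polynomial time, and the containment is then decided by comparing $|\langle G^{f_\mathfrak{F}(p)}, C_G(H/K)\rangle|$ with $|C_G(H/K)|$ via parts 1 and 6 of Theorem \ref{Basic}. Running this test over all (polynomially many) primes $p\mid|H/K|$ evaluates $\mathrm{f}(H/K,G)$ in polynomial time.

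With $\mathrm{f}$ shown to be a polynomial-time computable chief factor function satisfying $\mathcal{C}(\mathrm{f})=\mathfrak{F}$, Theorem \ref{residual} yields at once that $\mathfrak{F}$ is $P$-recognizable and that $G^\mathfrak{F}$ is computable in polynomial time. The main obstacle I anticipate is the passage from the membership test $G/C_G(H/K)\in f_\mathfrak{F}(p)$ to a computation performed on $G$ itself: one must reduce to the residual-containment criterion above rather than attempt to realize the quotient $G/C_G(H/K)$ as a fresh permutation group, since the hypothesis supplies polynomial-time residuals only for groups already presented as subgroups of symmetric groups. The reliance on the mod-CFSG centralizer computation of part 4 of Theorem \ref{Basic} is the other sensitive ingredient.
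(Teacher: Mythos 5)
Your proposal is correct and follows essentially the same route as the paper: define the chief factor function by $f_\mathfrak{F}$-centrality, verify the two axioms of Definition \ref{def1}, reduce the membership test $G/C_G(H/K)\in f_\mathfrak{F}(p)$ to the containment $G^{f_\mathfrak{F}(p)}\subseteq C_G(H/K)$, and invoke Theorem \ref{residual}. The only divergence is in implementing the centrality test: the paper pushes the reduction one step further to $|\langle [G^{f_\mathfrak{F}(p)},H],K\rangle|=|K|$, which needs only a commutator and a join, whereas you compute $C_G(H/K)$ explicitly via the mod-CFSG centralizer algorithm of part 4 of Theorem \ref{Basic} and then test the containment by orders --- both are polynomial-time, so the difference is cosmetic.
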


\begin{proof} Note that
\begin{align*}
  &G/C_G(H/K)\in f_\mathfrak{F}(p) \qquad\quad\,\,\forall p\in\pi(H/K)\\
  &\Leftrightarrow G^{f_\mathfrak{F}(p)}\subseteq C_G(H/K) \quad\quad\,\,\,\,\forall p\in\pi(H/K)\\
  &\Leftrightarrow [G^{f_\mathfrak{F}(p)}, H]\subseteq K \qquad\qquad\,\,\forall p\in\pi(H/K)\\
  &\Leftrightarrow |\langle [G^{f_\mathfrak{F}(p)}, H], K\rangle|=|K| \,\,\,\,\,\forall p\in\pi(H/K).
  \end{align*}
Let
\begin{displaymath}
\mathrm{f}_{\mathfrak{F}}(H/K, G)=\begin{cases}
  1,& H/K \textrm{ is }f_\mathfrak{F}\textrm{-central};\\
  0,& \textrm{ otherwise}.
\end{cases}
=\begin{cases}
  1,& |\langle [G^{f_\mathfrak{F}(p)}, H], K\rangle|=|K|\quad\forall p\in\pi(H/K);\\
  0,& \textrm{ otherwise}.
\end{cases}
\end{displaymath}
  From the definition of local formation it follows that  $\mathfrak{F}=\mathcal{C}(\mathrm{f}_\mathfrak{F})$. Since $G^{f_\mathfrak{F}(p)}$, the commutator of two subgroups, the join of two subgroups and the order of subgroup can be computed in a polynomial time, we see that $\mathrm{f}_{\mathfrak{F}}(H/K, G)$ can be computed in a polynomial time.

Lets prove that $\mathrm{f}_\mathfrak{F}$ is a chief factor function.
  If $H/K$ and $M/N$ are $G$-isomorphic chief factors, then $C_G(H/K)=C_G(M/N)$. Hence $G/C_G(H/K)=G/C_G(M/N)$. Therefore $\mathrm{f}_\mathfrak{F}$ satisfies (1) of Definition \ref{def1}.
Note that if $[G^{f_\mathfrak{F}(p)}, H]\subseteq K$ for all $p\in\pi(H/K)$, then
$$[(G/N)^{f_\mathfrak{F}(p)}, H/N]=[G^{f_\mathfrak{F}(p)}N/N, H/N]=[G^{f_\mathfrak{F}(p)}, H]N/N\subseteq K/N \quad\forall p\in\pi((H/N)/(K/N)).$$ Hence  $\mathrm{f}_\mathfrak{F}$ satisfies (2) of Definition \ref{def1}.

Therefore the statement of Theorem \ref{local} directly follows from Theorem \ref{residual}.
\end{proof}


The following classes of groups are local formations:
\begin{enumerate}
\item The class $\mathfrak{U}$ of all supersoluble groups. It is locally defined by $f_\mathfrak{U}(p)=\mathfrak{A}(p-1)$ (the class of all abelian groups of exponent dividing $p-1$).

 \item The class $w\mathfrak{U}$  of widely supersoluble groups \cite{Vasilev2010}. It is  locally defined by $f_{w\mathfrak{U}}(p)=\mathcal{A}(p-1)$ (the class of all groups all whose Sylow subgroups are abelian of exponent dividing $p-1$).

\item  The class $\mathfrak{N}\mathcal{A}$ of groups $G$ such that all Sylow subgroups of $G/\mathrm{F}(G)$ are abelian \cite{Vasilev2010}. It is locally defined by  $f_{\mathfrak{N}\mathcal{A}}(p)=\mathcal{A}$ (the class of groups all whose Sylow subgroups are abelian).

\item In \cite{Zimmermann1989} the class $sm\mathfrak{U}$ of groups with submodular Sylow subgroups were studied. It is locally defined \cite{Vasilyev2015} by $f_{sm\mathfrak{U}}(p)=\mathcal{A}(p-1)\cap \mathfrak{B}$
where $\mathfrak{B}$ is a class of groups with square-free exponent.

\item The class of strongly supersoluble groups $s\mathfrak{U}$ \cite{Vasilyev2015}. Its local definition $f_{s\mathfrak{U}}(p)=\mathfrak{A}(p-1)\cap\mathfrak{B}$.

 \item  The class $sh\mathfrak{U}$ of groups all whose Schmidt subgroups are supersoluble \cite{Monakhov1995, Monakhov2021}.  Its local definition $f_{sh\mathfrak{U}}(p)=\mathfrak{G}_{\pi(p-1)}$ (the class of all $\pi(p-1)$-groups).
\end{enumerate}

\begin{cor}
  Let  $\mathfrak{F}\in\{\mathfrak{U}, \mathrm{w}\mathfrak{U}, s\mathfrak{U}, sm\mathfrak{U}, \mathfrak{N}\mathcal{A}, sh\mathfrak{U}\}$. Then $\mathfrak{F}$ is $P$-recognizable and
  $G^\mathfrak{F}$ can be computed in a polynomial type. In particular, $\mathfrak{F}$-subnormality of a subgroup can be tested in a polynomial time.  \end{cor}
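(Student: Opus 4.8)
The plan is to reduce both assertions to the machinery already built. By Theorem \ref{local}, to get $P$-recognizability and polynomial-time computation of $G^{\mathfrak{F}}$ it suffices to show that for each of the six formations the residual $G^{f_{\mathfrak{F}}(p)}$ of its local definition is computable in polynomial time for every prime $p$ and every $G\le S_n$. Once that is in hand, the subnormality assertions follow from Theorem \ref{Fsubnorm}, provided each $\mathfrak{F}$ is hereditary. Reading off the table, the only classes occurring as values $f_{\mathfrak{F}}(p)$ are $\mathfrak{A}(p-1)$, $\mathcal{A}(p-1)$, $\mathcal{A}$, $\mathfrak{G}_{\pi(p-1)}$, together with $\mathfrak{A}(p-1)\cap\mathfrak{B}$ and $\mathcal{A}(p-1)\cap\mathfrak{B}$, so I would treat these families one at a time.

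First I would dispose of the two intersections with $\mathfrak{B}$, and here a small observation avoids trouble. On a class whose Sylow subgroups are abelian of exponent dividing $m$, imposing square-free exponent simply replaces $m$ by its square-free kernel $m_0=\prod_{q\mid m}q$; hence $\mathfrak{A}(p-1)\cap\mathfrak{B}=\mathfrak{A}(m_0)$ and $\mathcal{A}(p-1)\cap\mathfrak{B}=\mathcal{A}(m_0)$ with $m_0$ the square-free part of $p-1$. This folds $\mathfrak{B}$ into the exponent bound and so reduces all six local definitions to the three families $\mathfrak{A}(m)$, $\mathcal{A}(m)$ (with $\mathcal{A}$ the case with no exponent bound) and $\mathfrak{G}_{\pi}$. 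The point of doing this rather than using the general identity $G^{\mathfrak{X}\cap\mathfrak{Y}}=G^{\mathfrak{X}}G^{\mathfrak{Y}}$ is that the bare residual $G^{\mathfrak{B}}$ would require the $q$-th-power verbal subgroup of possibly non-abelian Sylow subgroups, which is not obviously computable from generators, whereas in every residual below the power subgroups sit on top of an imposed abelian quotient.

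Next I would write each atomic residual as an explicit word in the operations of Theorem \ref{Basic}. Writing $G=\langle S\rangle$ and choosing one Sylow $q$-subgroup $P_q=\langle S_q\rangle$ per $q\in\pi(G)$, the common principle is that $G/N$ satisfies the defining Sylow/exponent condition iff $N$ absorbs the corresponding verbal subgroup of each $P_q$, and since $N$ is normal and Sylow subgroups are conjugate it suffices to impose this for one $P_q$ per prime. Concretely $G^{\mathfrak{A}(m)}=\langle G',\,s^{m}:s\in S\rangle$; $G^{\mathcal{A}}=\langle [P_q,P_q]:q\in\pi(G)\rangle^{G}$; $G^{\mathcal{A}(m)}=\langle [P_q,P_q],\,s^{m_q}:q\in\pi(G),\,s\in S_q\rangle^{G}$, where $m_q$ is the $q$-part of $m$ (so $q\nmid m$ collapses the whole Sylow $q$-subgroup into the residual); and $G^{\mathfrak{G}_{\pi}}=\langle P_q:q\in\pi(G)\setminus\pi\rangle^{G}$. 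In each case the relevant quotient is abelian wherever a power appears, so the power subgroups are generated by $m_q$-th powers of the $S_q$, and every ingredient (derived subgroups, commutators, power subgroups, Sylow subgroups mod CFSG, normal closures) is polynomial-time by Theorem \ref{Basic}; since $|\pi(G)|\le\log_2|G|=O(n\log n)$ only polynomially many primes are processed. Invoking Theorem \ref{local} then yields $P$-recognizability and polynomial-time computation of $G^{\mathfrak{F}}$ for all six formations.

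Finally, for the subnormality claim I would record that each of the six is hereditary: $\mathfrak{U},w\mathfrak{U},s\mathfrak{U},sm\mathfrak{U},sh\mathfrak{U}$ are subgroup-closed by their definitions in the cited works, and $\mathfrak{N}\mathcal{A}$ is hereditary because for $H\le G$ one has $H\cap\mathrm{F}(G)\le\mathrm{F}(H)$, so $H/\mathrm{F}(H)$ is a quotient of $H\mathrm{F}(G)/\mathrm{F}(G)\le G/\mathrm{F}(G)\in\mathcal{A}$ and hence lies in $\mathcal{A}$. With heredity and poly-time residuals in place, Theorem \ref{Fsubnorm} supplies poly-time tests for $\mathfrak{F}$-subnormality and $K$-$\mathfrak{F}$-subnormality, completing the corollary. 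The only real obstacle I anticipate is the bookkeeping underlying the second paragraph: correctly folding square-free exponent into the bound via the square-free kernel (thereby never needing $G^{\mathfrak{B}}$ for non-abelian Sylow subgroups) and verifying the exponent handling in $G^{\mathcal{A}(m)}$ for primes $q\nmid m$; the polynomial bounds themselves are immediate once the formulas are established.
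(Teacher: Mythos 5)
Your proposal is correct and follows essentially the same route as the paper: compute generating sets of Sylow subgroups, express each local-definition residual $G^{f_{\mathfrak{F}}(p)}$ as an explicit subgroup built from commutators, power words and normal closures (the paper's exponent $\prod_{q\in\pi(p-1)}q$ is exactly your square-free kernel $m_0$ folding $\mathfrak{B}$ into the bound), note that all six formations are hereditary, and invoke Theorems \ref{local} and \ref{Fsubnorm}. Your extra justifications (why $G^{\mathfrak{B}}$ is never needed, the $q\nmid m$ case, heredity of $\mathfrak{N}\mathcal{A}$) only make explicit what the paper leaves as ``straightforward to check.''
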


\begin{proof}
Let $G=\langle S\rangle$. Note that the generating set $S_p$ of a Sylow $p$-subgroup of $G$ can be computed in a polynomial time by 4 of Theorem \ref{Basic}. It is straightforward to check that

1.  $G^{f_\mathfrak{U}(p)}=\langle \{[x, y], x^{p-1}\mid x, y\in S\} \rangle$.

2. $G^{f_{w\mathfrak{U}}(p)}=\langle (\bigcup_{p\in\pi(G)}\{[x, y], x^{p-1}\mid x, y\in S_p\})^G \rangle$.

3.  $G^{f_{\mathfrak{N}\mathcal{A}}(p)}=\langle (\bigcup_{p\in\pi(G)}\{[x, y]\mid x, y\in S_p\})^G \rangle$.

4.  $G^{f_{sm\mathfrak{U}}(p)}=\langle (\bigcup_{p\in\pi(G)}\{[x, y], x^{\prod_{q\in\pi(p-1)}q}\mid x, y\in S_p\})^G \rangle$.

5. $G^{f_{s\mathfrak{U}}(p)}=\langle \{[x, y], x^{\prod_{q\in\pi(p-1)}q}\mid x, y\in S\} \rangle$.

6. $G^{f_{sh\mathfrak{U}}(p)}=\langle (\bigcup_{p\not\in\pi(p-1)}\{x \mid x\in S_p\})^G \rangle$.

Hence every of the above mentioned subgroups can be computed in a polynomial time.  Note that all  these formations are hereditary. Thus the statement of a corollary directly follows from Theorems \ref{Fsubnorm} and \ref{local}.
\end{proof}

Let $f$ be a local definition of a local formation $\mathfrak{F}$.  Recall that if $f(p)\subseteq\mathfrak{F}$ for all $p$, then every $f$-central chief factor is called $\mathfrak{F}$-\emph{central} and every non-$f$-central chief factor is called  $\mathfrak{F}$-\emph{eccentric}.

\begin{lem}\label{Fcentral}
Assume that $f_\mathfrak{F}$ is a local definition of a local formation $\mathfrak{F}$ and    $G^{f_\mathfrak{F}(p)}$ can be computed in a polynomial time for every prime $p\in\pi(G)$. Then $G^{F_\mathfrak{F}(p)}$ can be computed in a polynomial time for every prime $p\in\pi(G)$ where $F_\mathfrak{F}$ is the canonical local definition of $\mathfrak{F}$. In particular the check of $\mathfrak{F}$-centrality of a chief factor can be done in a polynomial time.
\end{lem}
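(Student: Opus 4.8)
The plan is to reduce the computation of the $F_\mathfrak{F}(p)$-residual to the computations of the $f_\mathfrak{F}(p)$-residual and the $\mathfrak{F}$-residual, both of which are already available in polynomial time. Recall from the theory of local formations \cite[IV, \S3]{Doerk1992} that if $f_\mathfrak{F}$ is a local definition of $\mathfrak{F}$, then the canonical (i.e. full and integrated) local definition of $\mathfrak{F}$ is
$F_\mathfrak{F}(p)=\mathfrak{F}\cap\mathfrak{N}_pf_\mathfrak{F}(p)$, where $\mathfrak{N}_p$ denotes the formation of all $p$-groups. Equivalently $F_\mathfrak{F}(p)=\mathfrak{N}_p(f_\mathfrak{F}(p)\cap\mathfrak{F})$, a form which makes transparent that $F_\mathfrak{F}(p)$ is full ($\mathfrak{N}_p\mathfrak{N}_p=\mathfrak{N}_p$) and integrated. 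So the whole task is to turn this formula into subgroup-theoretic operations that we already know to be polynomial-time.

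For this I would invoke the two standard residual identities for formation operations \cite{Doerk1992}: for formations $\mathfrak{X},\mathfrak{Y}$ one has $G^{\mathfrak{X}\cap\mathfrak{Y}}=G^{\mathfrak{X}}G^{\mathfrak{Y}}$ and $G^{\mathfrak{X}\mathfrak{Y}}=(G^{\mathfrak{Y}})^{\mathfrak{X}}$. Applying the first with $\mathfrak{X}=\mathfrak{F}$ and $\mathfrak{Y}=\mathfrak{N}_pf_\mathfrak{F}(p)$, and the second with $\mathfrak{X}=\mathfrak{N}_p$ and $\mathfrak{Y}=f_\mathfrak{F}(p)$, yields
\[
G^{F_\mathfrak{F}(p)}=G^{\mathfrak{F}}\cdot G^{\mathfrak{N}_pf_\mathfrak{F}(p)}=G^{\mathfrak{F}}\cdot\bigl(G^{f_\mathfrak{F}(p)}\bigr)^{\mathfrak{N}_p}=G^{\mathfrak{F}}\cdot O^{p}\!\bigl(G^{f_\mathfrak{F}(p)}\bigr),
\]
where $O^{p}(H)=H^{\mathfrak{N}_p}$ is the smallest normal subgroup of $H$ with $p$-group quotient. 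Thus the $F_\mathfrak{F}(p)$-residual splits into three pieces that can be handled separately.

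It then remains to check that each piece is polynomial-time. The residual $G^{\mathfrak{F}}$ is computable in polynomial time by Theorem \ref{local}, whose hypothesis is exactly the present one, and $G^{f_\mathfrak{F}(p)}$ is computable by assumption. For $O^{p}(H)$ with $H=G^{f_\mathfrak{F}(p)}$ I would use $O^{p}(H)=\langle P_q\mid q\in\pi(H)\setminus\{p\}\rangle^{H}$, where $P_q$ is a Sylow $q$-subgroup of $H$: a Sylow subgroup for each of the at most $O(\log|H|)$ relevant primes is found in polynomial time by 5 of Theorem \ref{Basic}, and the join together with its normal closure by 6 and 3 of Theorem \ref{Basic}. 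The final product $G^{\mathfrak{F}}\cdot O^{p}(G^{f_\mathfrak{F}(p)})$ is once more a join of two normal subgroups, obtained by 6 of Theorem \ref{Basic}. The ``in particular'' assertion then follows exactly as in the proof of Theorem \ref{local}: a chief factor $H/K$ is $\mathfrak{F}$-central iff $G/C_G(H/K)\in F_\mathfrak{F}(p)$ for every $p\in\pi(H/K)$, which is equivalent to $[G^{F_\mathfrak{F}(p)},H]\leq K$, and this last condition is tested in polynomial time via 1 and 6 of Theorem \ref{Basic}.

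The main obstacle I expect is the passage from the abstract description of the canonical local definition to subgroup operations, namely justifying the structural identity $F_\mathfrak{F}(p)=\mathfrak{F}\cap\mathfrak{N}_pf_\mathfrak{F}(p)$ together with the product-residual formula $G^{\mathfrak{N}_p\mathfrak{Y}}=O^{p}(G^{\mathfrak{Y}})$; once these are in hand the polynomial-time bookkeeping is routine given Theorem \ref{Basic}. A secondary point to treat with care is that $f_\mathfrak{F}$ is \emph{not} assumed integrated, so one should either note that replacing $f_\mathfrak{F}(p)$ by $f_\mathfrak{F}(p)\cap\mathfrak{F}$ (whose residual is the still-computable $G^{f_\mathfrak{F}(p)}G^{\mathfrak{F}}$) reduces to the integrated case, or verify the displayed formula directly without assuming integration.
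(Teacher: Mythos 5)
Your argument is correct and follows the same basic strategy as the paper's: express the canonical local definition in terms of $f_\mathfrak{F}(p)$ and $\mathfrak{F}$, translate it into residuals via the identities $G^{\mathfrak{X}\cap\mathfrak{Y}}=G^{\mathfrak{X}}G^{\mathfrak{Y}}$ and $G^{\mathfrak{X}\mathfrak{Y}}=(G^{\mathfrak{Y}})^{\mathfrak{X}}$, and check that each ingredient is polynomial-time. The one substantive difference is which form of the canonical local definition you start from: the paper uses $F_\mathfrak{F}(p)=\mathfrak{N}_p(\mathfrak{F}\cap f_\mathfrak{F}(p))$ and gets $G^{F_\mathfrak{F}(p)}=\bigl(G^{f_\mathfrak{F}(p)}G^{\mathfrak{F}}\bigr)^{\mathfrak{N}_p}=O^p\bigl(G^{f_\mathfrak{F}(p)}G^{\mathfrak{F}}\bigr)$, whereas you use $F_\mathfrak{F}(p)=\mathfrak{F}\cap\mathfrak{N}_pf_\mathfrak{F}(p)$ and get $G^{\mathfrak{F}}\cdot O^p\bigl(G^{f_\mathfrak{F}(p)}\bigr)$. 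These two subgroups are equal, but not for purely formal reasons: writing $A=G^{f_\mathfrak{F}(p)}$ and $B=G^{\mathfrak{F}}$ one always has $O^p(A)\leq O^p(AB)\leq B\,O^p(A)$, and equality of the outer terms requires $B\leq O^p(AB)$, which holds exactly because the canonical local definition is integrated, so that $G^{\mathfrak{F}}\leq G^{F_\mathfrak{F}(p)}$. This is precisely the content of your ``equivalently'' between $\mathfrak{F}\cap\mathfrak{N}_pf_\mathfrak{F}(p)$ and $\mathfrak{N}_p(f_\mathfrak{F}(p)\cap\mathfrak{F})$, which you assert rather than prove and yourself flag as the main obstacle; it is true, but a self-contained write-up should either prove that class identity or simply start, as the paper does, from the textbook form $F_\mathfrak{F}(p)=\mathfrak{N}_p(\mathfrak{F}\cap f_\mathfrak{F}(p))$. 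On the computational side you are more explicit than the paper, which merely asserts that $(G^{f_\mathfrak{F}(p)}G^{\mathfrak{F}})^{\mathfrak{N}_p}$ is computable by Theorem \ref{local}; your description of $O^p(H)$ as the normal closure of the join of the Sylow $q$-subgroups of $H$ for $q\in\pi(H)\setminus\{p\}$ (items 5, 6 and 3 of Theorem \ref{Basic}, hence mod CFSG) fills in a detail the paper leaves implicit. The ``in particular'' clause is handled identically in both arguments.
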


\begin{proof}
  Recall that $F_\mathfrak{F}(p)=\mathfrak{N}_p(\mathfrak{F}\cap f_\mathfrak{F}(p))$. So $G^{F_\mathfrak{F}(p)}=(G^{f_\mathfrak{F}(p)}G^\mathfrak{F})^{\mathfrak{N}_p}$ can be computed in a polynomial time   by Theorem \ref{local} for any $p\in\pi(G)$. Following the proof of this theorem we can check a chief factor for $F_\mathfrak{F}$-centrality (which is the same as $\mathfrak{F}$-centrality)   in a polynomial time.
\end{proof}

One of important families of formations are Baer-local or composition formations. There are many ways to define them (see \cite[IV, Definitions 4.9]{Doerk1992}, \cite[p. 4]{Guo2015} and \cite[Definition 3.11]{Shemetkov1978}).
 A function of the form $f: \{Simple\,\,groups\}\rightarrow\{formations\}$ is called a Baer function. $ f(Z_p)$ is denoted by $f(p)$ where $Z_p$ is a cyclic group of order $p$.
 A chief factor $H/K$ of a group $G$ is called \emph{$f$-central} if $G/C_G(H/K)\in f(S)$ where all composition factors of $H/K$ are isomorphic to $S$. A formation $\mathfrak{F}$ is called \emph{Baer-local} if its coincides with the class of groups all whose chief factors are $f$-central for some Baer function $f$.
It is known  (see \cite[1, Theorem 1.6]{Guo2015}) that Baer-local formation can be defined by Baer function $f$ such that $f(0)=f(S)$ for every non-abelian simple group.

\begin{thm}\label{composition}
  Let $f$ be a Baer-local definition of a  composition formation $\mathfrak{F}$. Assume that $G^{f(x)}$ can be computed in polynomial time for      every $x\in\mathbb{P}\cup\{0\}$. Then $\mathfrak{F}$  is $P$-recognizable and $G^\mathfrak{F}$ can be computed in polynomial time.
\end{thm}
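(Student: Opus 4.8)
The plan is to mirror the proof of Theorem \ref{local} and reduce everything to an application of Theorem \ref{residual}. First I would define a chief factor function $\mathrm{f}_\mathfrak{F}$ recording $f$-centrality: set $\mathrm{f}_\mathfrak{F}(H/K, G)=1$ exactly when $H/K$ is $f$-central, and $0$ otherwise. The crucial observation is that a chief factor $H/K$ of $G$ is of exactly one of two kinds. Either it is an elementary abelian $p$-group, in which case $f$-centrality means $G/C_G(H/K)\in f(p)$; or it is non-abelian (a direct product of copies of a non-abelian simple group $S$), in which case, since we may assume $f(0)=f(S)$ for every non-abelian simple $S$, $f$-centrality means $G/C_G(H/K)\in f(0)$. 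As in the local case, $G/C_G(H/K)\in f(x)$ is equivalent to $G^{f(x)}\subseteq C_G(H/K)$, i.e. to $[G^{f(x)}, H]\subseteq K$, i.e. to $|\langle [G^{f(x)}, H], K\rangle|=|K|$. This lets me express $\mathrm{f}_\mathfrak{F}$ as a single order comparison once the appropriate index $x\in\mathbb{P}\cup\{0\}$ has been selected.

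The second step is to verify that $\mathrm{f}_\mathfrak{F}$ is genuinely a chief factor function, i.e. satisfies conditions (1) and (2) of Definition \ref{def1}, and that $\mathfrak{F}=\mathcal{C}(\mathrm{f}_\mathfrak{F})$. The latter is immediate from the definition of a Baer-local formation. The former repeats the corresponding arguments in the proof of Theorem \ref{local} almost verbatim: $G$-isomorphic chief factors share the same centralizer and the same isomorphism type of composition factor, which gives (1), while the identity $[(G/N)^{f(x)}, H/N]=[G^{f(x)}, H]N/N$ gives (2).

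The third step, where this theorem departs from Theorem \ref{local}, is to confirm that $\mathrm{f}_\mathfrak{F}(H/K, G)$ is computable in polynomial time. I would first decide whether $H/K$ is abelian by computing the derived subgroup $[H,H]$ via part 6 of Theorem \ref{Basic} and testing $|\langle [H,H], K\rangle|=|K|$. If $H/K$ is abelian, I would recover the relevant prime $p$ as a prime divisor of $|H/K|=|H|/|K|$, which is feasible since every prime in $\pi(G)$ is at most $n$, and take $x=p$; otherwise I would take $x=0$ directly. Here the normalization $f(0)=f(S)$ is exactly what spares me from having to identify the non-abelian composition type $S$. Given $x$, the subgroup $G^{f(x)}$ is computable in polynomial time by hypothesis, and the commutator, join, and order comparison are polynomial by Theorem \ref{Basic}.

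The step I expect to require the most care is this abelian/non-abelian dichotomy together with the selection of $x$: the whole argument hinges on the normalization $f(0)=f(S)$ for non-abelian $S$, without which a non-abelian chief factor would force me to identify its composition factor $S$ in order to evaluate $f(S)$. With $\mathrm{f}_\mathfrak{F}$ established as a polynomial-time chief factor function satisfying $\mathfrak{F}=\mathcal{C}(\mathrm{f}_\mathfrak{F})$, the conclusion that $\mathfrak{F}$ is $P$-recognizable and that $G^\mathfrak{F}$ is computable in polynomial time follows directly from Theorem \ref{residual}.
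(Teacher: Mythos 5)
Your proposal is correct and follows essentially the same route as the paper: both define the chief factor function by the abelian/non-abelian dichotomy (using $f(p)$ for $p$-chief factors and the normalized value $f(0)$ for non-abelian ones, translated into the order test $|\langle [G^{f(x)},H],K\rangle|=|K|$), verify the chief factor function axioms as in Theorem \ref{local}, and conclude by Theorem \ref{residual}. Your write-up is in fact more complete than the paper's, which leaves the polynomial-time detection of the abelian/non-abelian case and the selection of $p$ implicit.
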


\begin{proof} Let
  \begin{displaymath}
\mathrm{f}_{\mathfrak{F}}(H/K, G)=\begin{cases}
  1,& H/K \textrm { is non-abelian and } |\langle [G^{f_\mathfrak{F}(0)}, H], K\rangle|=|K|;\\
   1,&  H/K\textrm { is a $p$-group and } |\langle[G^{f_\mathfrak{F}(p)}, H], K\rangle|=|K|;\\
  0,& \textrm{ otherwise}.
\end{cases}
\end{displaymath}
As in the proof of Theorem \ref{local} we can chow that $\mathrm{f}_{\mathfrak{F}}$ is a chief factor function and $\mathfrak{F}=\mathcal{C}(\mathrm{f}_{\mathfrak{F}})$.
\end{proof}

\subsection{The lattice of chief factor functions}

For a chief factor functions $\mathrm{f}_1$ and $\mathrm{f}_2$ let
\begin{enumerate}
\item $(\mathrm{f}_1\vee\mathrm{f}_2)(H/K, G)=1$ iff $\mathrm{f}_1(H/K, G)=1$ or $\mathrm{f}_2(H/K, G)=1$.

\item $(\mathrm{f}_1\wedge\mathrm{f}_2)(H/K, G)=1$ iff $\mathrm{f}_1(H/K, G)=1$ and $\mathrm{f}_2(H/K, G)=1$.

\item $\overline{\mathrm{f}}_1(H/K, G)=1$ iff $\mathrm{f}_1(H/K, G)=0$.
\end{enumerate}
It is straightforward to check that these functions are chief factor functions. If $\mathrm{f}_1(H/K, G)$  and $\mathrm{f}_2(H/K, G)$ can be computed in a polynomial time, then functions from 1--3 can also be computed in a polynomial time. Note that the first two items defines the structure of a distributive lattice on the set of chief factor functions.

\begin{thm}
$P$-recognizable chief factor formations form a distributive lattice.
\end{thm}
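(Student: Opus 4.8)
The plan is to transport the distributive lattice structure on chief factor functions, noted just above, across the correspondence $\mathrm{f}\mapsto\mathcal{C}(\mathrm{f})$. The subtlety to respect from the start is that this correspondence is far from injective --- very different functions define the same formation --- so the induced lattice operations must be read off from a canonical function rather than from an arbitrary representative. To each $P$-recognizable chief factor formation $\mathfrak{F}$ I attach the set $T_\mathfrak{F}$ of those classes of chief factors, under the equivalence of Definition \ref{def1}, that actually occur in members of $\mathfrak{F}$, and I take its indicator as the canonical function. Using the Jordan--H\"older theorem one checks that $\mathfrak{F}=\mathcal{C}(\mathbf{1}_{T_\mathfrak{F}})$ and that $T_\mathfrak{F}$ is the smallest set of classes defining $\mathfrak{F}$.

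First I would record that meets are intersections: a group lies in $\mathcal{C}(\mathrm{f}_1\wedge\mathrm{f}_2)$ iff each of its chief factors is accepted by both functions, so $\mathcal{C}(\mathrm{f}_1\wedge\mathrm{f}_2)=\mathcal{C}(\mathrm{f}_1)\cap\mathcal{C}(\mathrm{f}_2)$, which is again a $P$-recognizable chief factor formation and is the inclusion-meet; in type-set language, $\mathfrak{F}\cap\mathfrak{G}=\mathcal{C}(T_\mathfrak{F}\cap T_\mathfrak{G})$. Next I would identify the inclusion-join with $\mathcal{C}(T_\mathfrak{F}\cup T_\mathfrak{G})$: it contains both formations, and if some $\mathcal{C}(T_\mathfrak{K})$ contains both then $T_\mathfrak{F}\cup T_\mathfrak{G}\subseteq T_\mathfrak{K}$, since the classes occurring in a subformation occur in the larger one, whence $\mathcal{C}(T_\mathfrak{F}\cup T_\mathfrak{G})\subseteq\mathcal{C}(T_\mathfrak{K})$. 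Moreover $T_\mathfrak{F}\cup T_\mathfrak{G}$ is again of the form $T_\mathfrak{H}$: each of its classes occurs in a member of $\mathfrak{F}$ or of $\mathfrak{G}$, hence in a member of $\mathcal{C}(T_\mathfrak{F}\cup T_\mathfrak{G})$. Thus the join stays in the family and is computed by the union of type-sets.

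Everything now reduces to one structural question. If the family $\{T_\mathfrak{F}\}$ is closed under intersection as well as union, then $\mathfrak{F}\mapsto T_\mathfrak{F}$ is a lattice embedding into the Boolean lattice $(\mathcal{P}(\mathrm{Classes}),\cup,\cap)$, and distributivity is inherited for free, since every sublattice of a Boolean lattice is distributive. Closure under union was just seen; the main obstacle is closure under intersection, namely: whenever a class $t$ occurs in a member of $\mathfrak{F}$ and in a member of $\mathfrak{G}$, it must already occur in some group all of whose chief factor classes lie in $T_\mathfrak{F}\cap T_\mathfrak{G}$. I expect this to follow from the existence, for each class $t$, of a least realizer --- a group realizing $t$ whose set of chief factor classes is contained in that of every other realizer of $t$ --- built from the primitive group attached to $t$; the inverted chief factor of order $3$, which can only occur together with, and is jointly realized with, a central chief factor of order $2$ in the symmetric group of degree $3$, already displays the mechanism. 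Granting this lemma the embedding into a Boolean lattice is complete, and the theorem follows.
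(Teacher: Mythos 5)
You have put your finger on exactly the right difficulty --- the map $\mathrm{f}\mapsto\mathcal{C}(\mathrm{f})$ is not injective, so the lattice structure on chief factor functions does not automatically descend to the formations --- and in this respect your analysis is more careful than the source, which offers no proof beyond the remark that $\vee$ and $\wedge$ make the \emph{functions} a distributive lattice and implicitly takes $\mathcal{C}(\mathrm{f}_1)\vee\mathcal{C}(\mathrm{f}_2):=\mathcal{C}(\mathrm{f}_1\vee\mathrm{f}_2)$, $\mathcal{C}(\mathrm{f}_1)\wedge\mathcal{C}(\mathrm{f}_2):=\mathcal{C}(\mathrm{f}_1\wedge\mathrm{f}_2)$ without addressing independence of representatives. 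Your reduction is also correctly set up: meets are intersections, the join is $\mathcal{C}(\mathbf{1}_{T_\mathfrak{F}\cup T_\mathfrak{G}})$ with $T_{\mathfrak{F}\vee\mathfrak{G}}=T_\mathfrak{F}\cup T_\mathfrak{G}$, and everything reduces to whether the family $\{T_\mathfrak{F}\}$ is closed under intersection, i.e.\ whether $T_{\mathfrak{F}\cap\mathfrak{G}}=T_\mathfrak{F}\cap T_\mathfrak{G}$.

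But that is precisely where your proof stops being a proof. The ``least realizer'' lemma is stated as something you \emph{expect}, not something you establish, and it is exactly equivalent to the distributive law you are trying to prove (without it one only gets the inequality $(\mathfrak{F}\wedge\mathfrak{G})\vee(\mathfrak{F}\wedge\mathfrak{H})\subseteq\mathfrak{F}\wedge(\mathfrak{G}\vee\mathfrak{H})$, which holds in every lattice). It is not obviously true: a group $G$ witnessing $t\in T_\mathfrak{F}\cap(T_\mathfrak{G}\cup T_\mathfrak{H})$ may have its remaining chief factor classes split between $T_\mathfrak{G}$ and $T_\mathfrak{H}$, so $G$ itself lies in neither intersection, and your own $S_3$ example shows that a class can force the presence of companion classes in every realizer --- the very phenomenon that could defeat the lemma. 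A second, smaller gap: the theorem concerns \emph{$P$-recognizable} formations, so the lattice operations must stay inside that family; your canonical indicator $\mathbf{1}_{T_\mathfrak{F}}$ is defined by an abstract realizability condition with no algorithm attached, so even granting the lemma you have not shown that your join $\mathcal{C}(\mathbf{1}_{T_\mathfrak{F}\cup T_\mathfrak{G}})$ admits a polynomial-time membership test (the representative-dependent join $\mathcal{C}(\mathrm{f}_1\vee\mathrm{f}_2)$ does, but then one is back to the well-definedness problem). To complete the argument you must either prove the realizer lemma and the recognizability of the canonical join, or else show directly that $\mathcal{C}(\mathrm{f}_1\vee\mathrm{f}_2)$ and $\mathcal{C}(\mathrm{f}_1\wedge\mathrm{f}_2)$ depend only on $\mathcal{C}(\mathrm{f}_1)$ and $\mathcal{C}(\mathrm{f}_2)$; as it stands the key step is asserted, not proved.
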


The formation $\mathfrak{F}$ of groups whose $3$-chief factors are not central plays an important role  as a counterexample  in the study of mutual permutable products of groups (see \cite[Example 4.4.8]{PFG}). It is clear that this class of groups is defined by a chief factor function $\textrm{f}$ such that $\textrm{f}(H/K, G)=1$ if $H/K$ is not a central 3-chief factor or is   not a 3-chief factor. Since the orders of a chief factor and its centralizer can be computed in a polynomial time by Theorem \ref{Basic}, we see

\begin{prop}
  The formation of groups whose $3$-chief factors are not central is $P$-recognizable.
\end{prop}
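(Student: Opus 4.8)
The plan is to exhibit $\mathfrak{F}$ as $\mathcal{C}(\mathrm{f})$ for a single chief factor function $\mathrm{f}$ that can be evaluated in polynomial time, and then to quote Theorem \ref{residual} verbatim. Concretely, I would set $\mathrm{f}(H/K,G)=0$ exactly when $H/K$ is a central $3$-chief factor and $\mathrm{f}(H/K,G)=1$ otherwise; that is, $\mathrm{f}(H/K,G)=0$ iff $|H/K|$ is a power of $3$ and $C_G(H/K)=G$. By the very definition of $\mathcal{C}(\mathrm{f})$ a group $G$ lies in $\mathcal{C}(\mathrm{f})$ iff none of its chief factors is a central $3$-chief factor, which is precisely the defining property of $\mathfrak{F}$; so the equality $\mathfrak{F}=\mathcal{C}(\mathrm{f})$ is immediate and carries no real content.

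The first genuine task is to check that $\mathrm{f}$ satisfies the two axioms of Definition \ref{def1}. For axiom (1) I would note that $G$-isomorphic chief factors have equal orders, so one is a $3$-group iff the other is, and they share the same centralizer $C_G(H/K)=C_G(M/N)$, so centrality is preserved; hence $\mathrm{f}$ is constant on $G$-isomorphism classes. For axiom (2), given $N\trianglelefteq G$ with $N\leq K$, the Isomorphism Theorems identify $(H/N)/(K/N)$ with $H/K$, which keeps the order and thus the ``$3$-group'' status; and the standard compatibility $C_{G/N}\bigl((H/N)/(K/N)\bigr)=C_G(H/K)/N$ shows that the factor is central in $G/N$ iff it is central in $G$. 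This last identity is the one point I would verify carefully, since the whole argument that $\mathrm{f}$ descends to quotients rests on it.

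The second task is computability. Given a chief factor $H/K$ of $G\leq S_n$ I would first compute $|H/K|=|H|/|K|$ by part 1 of Theorem \ref{Basic} and test whether it is a power of $3$; since a non-abelian chief factor has order divisible by at least three distinct primes, a chief factor of $3$-power order is automatically elementary abelian, so this test correctly detects $3$-chief factors. If the order is not a power of $3$ I output $1$; otherwise I compute $C_{G/K}(H/K)$ by part 4 of Theorem \ref{Basic}, and since $C_G(H/K)/K=C_{G/K}(H/K)$ the factor is central iff $|C_{G/K}(H/K)|=|G/K|$, which is a single order comparison by part 1. All of these steps run in polynomial time. Having shown that $\mathrm{f}$ is a chief factor function computable in polynomial time, I would conclude by Theorem \ref{residual} that $\mathfrak{F}=\mathcal{C}(\mathrm{f})$ is $P$-recognizable. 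The argument is routine throughout; the only place demanding care is the transport of the centralizer under passage to the quotient $G/N$, both for axiom (2) and for justifying the centrality test in $G/K$ rather than directly in $G$.
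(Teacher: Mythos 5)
Your proposal is correct and follows essentially the same route as the paper: define $\mathrm{f}(H/K,G)=1$ unless $H/K$ is a central $3$-chief factor, observe that the order of a chief factor and its centralizer are computable in polynomial time by Theorem \ref{Basic}, and invoke Theorem \ref{residual}. You are somewhat more careful than the paper in explicitly verifying the two axioms of Definition \ref{def1} and the centralizer compatibility under quotients, which the paper dismisses as clear, but there is no substantive difference in approach.
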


It is well known that any Baer-local formation $\mathfrak{F}$ can be defined by Baer-function
 $F_\mathfrak{F}$ such that $F_\mathfrak{F}(0)=\mathfrak{F}$, i.e. the general definition of Baer-local
 formation gives little information about the action of an $\mathfrak{F}$-group $G$ on its non-abelian chief factors.
Therefore several families of Baer-local formations were introduced by giving additional
 information about the action of an $\mathfrak{F}$-group on its non-abelian chief factors.
For example, in \cite{Guo2009a, Guo2009}  Guo and Skiba introduced the class $\mathfrak{F}^*$ of  quasi-$\mathfrak{F}$-groups for a saturated formation $\mathfrak{F}$:

 \begin{center}
   $\mathfrak{F}^*=(G\,|$ for every $\mathfrak{F}$-eccentric chief factor $H/K$ and every $x\in G$, $x$ induces an inner automorphism on $H/K$).
 \end{center}
 If $\mathfrak{N}\subseteq\mathfrak{F}$ is a normally hereditary local formation, then $\mathfrak{F}^*$  is a normally hereditary Baer-local formation by \cite[Theorem~2.6]{Guo2009a}.

\begin{thm}\label{quasi}
   Let $f_\mathfrak{F}$ be a local definition of a local formation $\mathfrak{F}$. Assume that $G^{f_\mathfrak{F}(p)}$ can be computed in a polynomial time for every prime $p$. Then $\mathfrak{F}^*$ is $P$-recognizable and $G^{\mathfrak{F}^*}$ can be computed in a polynomial time.
\end{thm}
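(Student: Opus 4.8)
The plan is to realize $\mathfrak{F}^*$ as $\mathcal{C}(\mathrm{f})$ for a polynomial-time computable chief factor function $\mathrm{f}$ and then invoke Theorem \ref{residual}. Working with the canonical local definition $F_\mathfrak{F}$ (so that $\mathfrak{F}$-centrality is well defined and, by Lemma \ref{Fcentral}, decidable in polynomial time), I would set $\mathrm{f}(H/K, G)=1$ precisely when $H/K$ is $\mathfrak{F}$-central or $G=HC_G(H/K)$, and $\mathrm{f}(H/K, G)=0$ otherwise. With this definition $\mathfrak{F}^*=\mathcal{C}(\mathrm{f})$ is immediate from the definition of $\mathfrak{F}^*$: a chief factor contributes a value $1$ exactly when it is $\mathfrak{F}$-central or $\mathfrak{F}$-eccentric with every element of $G$ acting as an inner automorphism.

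The crucial point is the translation of the inner-automorphism condition into a group-theoretic equality that can be tested quickly. I would argue that $G$ acts on $H/K$ by conjugation with kernel $C_G(H/K)$, that $H$ induces precisely the inner automorphisms, and hence that $\mathrm{Inn}(H/K)=HC_G(H/K)/C_G(H/K)$ inside the image $G/C_G(H/K)$. Consequently every $x\in G$ induces an inner automorphism on $H/K$ if and only if $G/C_G(H/K)=HC_G(H/K)/C_G(H/K)$, that is, $G=HC_G(H/K)$. For an abelian factor this forces centrality, which is automatically $\mathfrak{F}$-central whenever $\mathfrak{N}\subseteq\mathfrak{F}$, so the eccentric branch only ever matters for non-abelian factors, as it should.

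Next I would check that $\mathrm{f}$ is a chief factor function. Condition (2) of Definition \ref{def1} is routine: for $N\trianglelefteq G$ with $N\leq K$ one has $(H/N)/(K/N)\cong H/K$ and $C_{G/N}((H/N)/(K/N))=C_G(H/K)/N$, so $G=HC_G(H/K)$ if and only if $G/N=(H/N)C_{G/N}((H/N)/(K/N))$; the $\mathfrak{F}$-central part is inherited from Theorem \ref{local}. For condition (1), the $\mathfrak{F}$-central part again transfers as in Theorem \ref{local}, while for the eccentric part the identity $|G:HC_G(H/K)|=|G:C_G(H/K)|/|\mathrm{Inn}(H/K)|$ shows that the equality $G=HC_G(H/K)$ depends only on the $G$-isomorphism type of the chief factor, so $G$-isomorphic factors receive the same value.

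Finally, polynomial-time computability of $\mathrm{f}$ follows by combining the two ingredients: $\mathfrak{F}$-centrality is decidable in polynomial time by Lemma \ref{Fcentral}, while $C_G(H/K)$ is computable by item 4 of Theorem \ref{Basic} (mod CFSG), the join $HC_G(H/K)=\langle H, C_G(H/K)\rangle$ by item 6, and the test $|HC_G(H/K)|=|G|$ by item 1. An application of Theorem \ref{residual} then yields both $P$-recognizability of $\mathfrak{F}^*$ and polynomial-time computation of $G^{\mathfrak{F}^*}$. I expect the main obstacle to be pinning down the equivalence between the inner-automorphism condition and the equality $G=HC_G(H/K)$, together with verifying axiom (1) for the eccentric branch, since both require care that the property genuinely depends only on the $G$-group structure of the chief factor and not on its particular embedding in $G$.
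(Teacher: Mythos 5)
Your proposal is correct and follows essentially the same route as the paper: encode $\mathfrak{F}^*$ by the chief factor function ``$\mathfrak{F}$-central or $HC_G(H/K)=G$'', decide $\mathfrak{F}$-centrality via Lemma \ref{Fcentral}, compute $C_G(H/K)$ and the join via Theorem \ref{Basic}, and apply Theorem \ref{residual}. You are in fact more careful than the paper, which states the equivalence with $HC_G(H/K)=G$ and the polynomial-time checks but omits the explicit verification that the resulting function satisfies axioms (1) and (2) of Definition \ref{def1}.
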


\begin{proof}
Note that every element of a group $G$  induces an inner automorphism on a chief factor $H/K$ if and only if $HC_G(H/K)=G$.  The last condition can be checked in a polynomial time by Theorem \ref{Basic}. Now we can check that either a chief factor is $\mathfrak{F}$-central or every element of a group $G$  induces an inner automorphism on it in a polynomial time by Lemma \ref{Fcentral}. Thus $\mathfrak{F}^*$ is $P$-recognizable and $G^{\mathfrak{F}^*}$ can be computed in a polynomial time by Theorem \ref{residual}.
\end{proof}

\begin{cor}
  Formation $\mathfrak{N}^*$ of all quasinilpotent groups is $P$-recognizable and $G^{\mathfrak{N}^*}$  can be computed in a polynomial time.
\end{cor}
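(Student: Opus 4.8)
The plan is to recognize this corollary as the special case $\mathfrak{F}=\mathfrak{N}$ of Theorem \ref{quasi}, and to verify the single hypothesis of that theorem for the formation $\mathfrak{N}$ of nilpotent groups. First I would recall the classical local structure of $\mathfrak{N}$: it is a local formation with canonical local definition $f_\mathfrak{N}(p)=(1)$ for every prime $p$, where $(1)$ denotes the trivial formation whose only member is the trivial group. This encodes the standard fact that $G$ is nilpotent precisely when $G$ centralizes each of its chief factors, i.e. $G/C_G(H/K)\in(1)$ for every chief factor $H/K$.

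Next I would check the hypothesis of Theorem \ref{quasi}, namely that $G^{f_\mathfrak{N}(p)}$ is computable in polynomial time for every prime $p$. Since $f_\mathfrak{N}(p)=(1)$ and the quotient $G/G^{(1)}$ must belong to $(1)$, the residual is forced to be $G^{(1)}=G$ for every group $G$. Returning $G$, which is already specified by its generators, costs no work, so this computation is trivially polynomial. With the hypothesis thus verified, I would simply invoke Theorem \ref{quasi} with $\mathfrak{F}=\mathfrak{N}$ to conclude that $\mathfrak{N}^*$ is $P$-recognizable and that $G^{\mathfrak{N}^*}$ can be computed in polynomial time.

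It remains only to note that $\mathfrak{N}^*$, as defined by the general construction $\mathfrak{F}^*$ in the excerpt, coincides with the usual class of quasinilpotent groups: the $\mathfrak{N}$-central chief factors are exactly the central ones, so $\mathfrak{N}^*$ consists of those $G$ in which, on every chief factor, each element acts either trivially or as an inner automorphism, which is the defining property of quasinilpotence. There is essentially no obstacle in this argument; all the computational content sits in the already-proven Theorem \ref{quasi}. The only two points requiring care are the identification of the local definition $f_\mathfrak{N}(p)=(1)$ and the observation that the residual with respect to the trivial formation is the whole group, both of which are immediate.
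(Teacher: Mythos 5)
Your proposal is correct and matches the paper's (implicit) argument: the corollary is stated as an immediate instance of Theorem \ref{quasi} with $\mathfrak{F}=\mathfrak{N}$, locally defined by $f_\mathfrak{N}(p)=(1)$, for which $G^{f_\mathfrak{N}(p)}=G$ is trivially computable. The only nitpick is terminological: $(1)$ is a local definition of $\mathfrak{N}$ but not the \emph{canonical} one (that would be $\mathfrak{N}_p$); this is harmless since Theorem \ref{quasi} only requires some polynomially computable local definition.
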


\subsection{$\mathfrak{F}$-subnormal subgroups}

In   \cite{Monakhov2018,  Murashka2018, Semenchuk2011, Vasilev2016} groups with $K$-$\mathfrak{F}$-subnormal or $\mathfrak{F}$-subnormal Sylow subgroups were studied. The class of all groups with $K$-$\mathfrak{F}$-subnormal (resp. $\mathfrak{F}$-subnormal) Sylow $\pi$-subgroups was denoted by $\overline{w}_\pi\mathfrak{F}$ (resp. $w_\pi\mathfrak{F}$, see \cite{Vasilev2016}).

\begin{thm}
  Let $\mathfrak{F}$ be a hereditary  formation such that $G^\mathfrak{F}$ can be computed in a polynomial time for every group $G$  and  $\pi$ be a set of primes such that $ \pi(G)\cap\pi$ can be computed in polynomial time for every group $G$.   Then $\mathrm{w}_\pi\mathfrak{F}$ and $\mathrm{\overline{w}}_\pi\mathfrak{F}$  are  $P$-recognizable formations.
\end{thm}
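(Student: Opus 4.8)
The plan is to reduce membership testing for $\mathrm{w}_\pi\mathfrak{F}$ and $\mathrm{\overline{w}}_\pi\mathfrak{F}$ to a bounded number of $\mathfrak{F}$-subnormality (resp. $K$-$\mathfrak{F}$-subnormality) tests handled by Theorem \ref{Fsubnorm}. Recall that $G\in\mathrm{w}_\pi\mathfrak{F}$ (resp. $G\in\mathrm{\overline{w}}_\pi\mathfrak{F}$) means that for every prime $p\in\pi$ the Sylow $p$-subgroups of $G$ are $\mathfrak{F}$-subnormal (resp. $K$-$\mathfrak{F}$-subnormal) in $G$; that these classes are formations is known (\cite{Vasilev2016}), so only $P$-recognizability has to be proved. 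By the definition of $P$-recognizability I must, given $K\trianglelefteq G\leq S_n$, decide $G/K\in\mathrm{w}_\pi\mathfrak{F}$ (resp. $\in\mathrm{\overline{w}}_\pi\mathfrak{F}$) in polynomial time.

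Since all Sylow $p$-subgroups are conjugate and $\mathfrak{F}$-subnormality is invariant under conjugation (one conjugates an entire defining chain), it suffices to test a single Sylow $p$-subgroup for each relevant $p$. First I would compute $|G/K|=|G|/|K|$ and then the finite set $\pi(G/K)\cap\pi$; the former is available by 1 of Theorem \ref{Basic} and the latter by the hypothesis on $\pi$. For each $p$ in this set I would compute a Sylow $p$-subgroup $P$ of $G$ by 5 of Theorem \ref{Basic} (mod CFSG) and form $PK\leq G$, so that $PK/K$ is a Sylow $p$-subgroup of $G/K$ (here $P\cap K$ is Sylow in $K$ because $K\trianglelefteq G$). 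I would then run the $\mathfrak{F}$-subnormality test of Theorem \ref{Fsubnorm} on the pair $(G,PK)$, returning \emph{True} iff the test succeeds for every such $p$; for $\mathrm{\overline{w}}_\pi\mathfrak{F}$ I would use the $K$-$\mathfrak{F}$-subnormality test instead.

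The key identity for correctness is that for $K\leq H_{i-1}\leq H_i\leq G$ one has $\mathrm{Core}_{H_i}(H_{i-1})/K=\mathrm{Core}_{H_i/K}(H_{i-1}/K)$, whence by the Isomorphism Theorems the groups $H_i/\mathrm{Core}_{H_i}(H_{i-1})$ and $(H_i/K)/\mathrm{Core}_{H_i/K}(H_{i-1}/K)$ are isomorphic. Since $\mathfrak{F}$ is a formation, a maximal chain from $PK$ to $G$ satisfies the $\mathfrak{F}$-subnormality conditions iff the corresponding chain from $PK/K$ to $G/K$ does, and likewise for the $K$-version (the extra option $H_{i-1}\trianglelefteq H_i$ also passes through the correspondence, as $H_{i-1}\trianglelefteq H_i$ iff $H_{i-1}/K\trianglelefteq H_i/K$). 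Thus $PK$ is $\mathfrak{F}$-subnormal in $G$ iff $PK/K$ is $\mathfrak{F}$-subnormal in $G/K$, so the algorithm decides $G/K\in\mathrm{w}_\pi\mathfrak{F}$ correctly, and analogously for $\mathrm{\overline{w}}_\pi\mathfrak{F}$. For the running time, the number of distinct primes in $\pi(G/K)\cap\pi$ is at most $\log_2|G|=O(n\log n)$, while each Sylow computation and each subnormality test is polynomial, so the whole procedure is polynomial.

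The delicate point I expect to be the main (though still routine) obstacle is precisely this quotient-to-group reduction: Theorem \ref{Fsubnorm} is stated for subgroups of $S_n$, so I must replace the test for $PK/K$ inside $G/K$ by the honest test for $PK$ inside $G\leq S_n$, and this rests on the core-correspondence identity above together with the closure of $\mathfrak{F}$ under isomorphism. Carefully tracking the maximality of chains and, in the $K$-case, the two clauses of $K$-$\mathfrak{F}$-subnormality through the correspondence is the part that requires attention; the remaining ingredients — conjugacy-invariance reducing the test to one Sylow subgroup per prime, and the logarithmic bound on the number of primes — are standard.
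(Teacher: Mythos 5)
Your proof is correct, but it takes a genuinely different route from the paper's. You reduce the question ``is $G/K\in\mathrm{w}_\pi\mathfrak{F}$?'' directly to $\mathfrak{F}$-subnormality tests for the subgroups $PK$ inside $G\leq S_n$, one Sylow subgroup per prime in $\pi(G/K)\cap\pi$, using the lattice correspondence (every term of a chain from $PK$ to $G$ contains $K$, cores correspond, and $\mathfrak{F}$ is isomorphism-closed) plus conjugacy of Sylow subgroups and the $O(n\log n)$ bound on the number of primes; the formation property you correctly outsource to \cite{Vasilev2016}, and all hypotheses of Theorem \ref{Fsubnorm} (heredity of $\mathfrak{F}$, polynomial computability of residuals) are available. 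The paper instead proves more: it analyses the run of ISKFSUBNORMAL on a non-subnormal Sylow subgroup $P$ to extract a subgroup $M(P)$ with $P^M=M$ and $PM^{\mathfrak{F}}=M$, shows that $PN/N$ is $K$-$\mathfrak{F}$-subnormal in $G/N$ exactly when $\mathrm{O}^p(M(P))\leq N$, and thereby exhibits $G^{\overline{\mathrm{w}}_\pi\mathfrak{F}}$ as the normal closure of $\langle \mathrm{O}^p(M(P))\rangle$; $P$-recognizability then follows from the residual as in the rest of the paper. So the paper's argument buys an explicit polynomial-time computation of the residual of $\mathrm{w}_\pi\mathfrak{F}$ and $\overline{\mathrm{w}}_\pi\mathfrak{F}$ (a strictly stronger output, consistent with the paper's general residual-first strategy), at the cost of a more delicate structural analysis; your argument is shorter and suffices for the literal statement, but yields only a membership oracle for quotients, not the residual itself.
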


\begin{proof}
  Note that if a Sylow $p$-subgroup of $G$ is $K$-$\mathfrak{F}$-subnormal, then every Sylow $p$-subgroup of every quotient group of $G$ is $K$-$\mathfrak{F}$-subnormal in it. Assume now that $P$ is not a $K$-$\mathfrak{F}$-subnormal Sylow subgroup of $G$. Then ISKFSUBNORMAL$(G, P, \mathfrak{F})$ finds a $K$-$\mathfrak{F}$-subnormal in $G$ subgroup $M=M(P)$ of $G$ with $P^M=M$ and $PM^\mathfrak{F}=M$. Assume that a Sylow $p$-subgroup $PN/N$ is $K$-$\mathfrak{F}$-subnormal in $G/N$.
  Since $\mathfrak{F}$ is a hereditary formation, we see that             $PN/N$ is $K$-$\mathfrak{F}$-subnormal in $MN/N$. From $(PN/N)^{MN/N}=P^MN/N=MN/N$  and $(PN/N)(M/N)^\mathfrak{F}=(PN/N)(M^\mathfrak{F}N/N)=MN/N$ it follows that $PN/N=MN/N$ is a $p$-group. Hence $\mathrm{O}^p(M)\leq N$.

  From the other hand $M$   is $K$-$\mathfrak{F}$-subnormal in $G$. Hence if $\mathrm{O}^p(M)\leq N$, then $PN/N=MN/N$ is     a $K$-$\mathfrak{F}$-subnormal Sylow $p$-subgroup of  $G/N$.

  Thus $G^{\mathrm{\overline{w}}_\pi\mathfrak{F}}$ is a normal closure of a subgroup generated by $\mathrm{O}^p(M(P))$ where $P$ is a non-$K$-$\mathfrak{F}$-subnormal subgroup of $G$ for $p\in\pi$. From Theorem \ref{Basic} it follows that this subgroup can be computed in a polynomial time.

  The algorithm for computing $G^{\mathrm{w}_\pi\mathfrak{F}}$ uses the same ideas.
\end{proof}

\subsection*{Acknowledgments}

I am grateful to A.\,F. Vasil'ev for helpful discussions.

{\small\bibliographystyle{siam}
\bibliography{BibAlg}}

\begin{thebibliography}{10}

\bibitem{zb2021}
{\em {Papers with classification 20D10}}, Feb. 2021.

\bibitem{Babai1986}
{\sc L.~Babai}, {\em On the length of subgroup chains in the symmetric group},
  Comm. Algebra, 14 (1986), pp.~1729--1736.

\bibitem{Babai1987}
{\sc L.~Babai, E.~M. Luks, and A.~Seress}, {\em {Permutations Groups in NC}},
  in Proc. 19th ACM STOC, 1987, pp.~409--420.

\bibitem{BallesterBolinches2013}
{\sc A.~Ballester-Bolinches, E.~Cosme-Ll\"opez, and R.~Esteban-Romero}, {\em
  Algorithms for permutability in finite groups}, Cent. Eur. J. Math., 11
  (2013), pp.~1914--1922.

\bibitem{BallesterBolinches2014}
\leavevmode\vrule height 2pt depth -1.6pt width 23pt, {\em {GAP package permut
  --- A package to deal with permutability in finite groups, v. 2.0.3, 2018 }},
  2014.

\bibitem{PFG}
{\sc A.~Ballester-Bolinches, R.~Esteban-Romero, and M.~Asaad}, {\em {Products
  of Finite Groups}}, De Gruyter, 2010.

\bibitem{BallesterBollinches2006}
{\sc A.~Ballester-Bollinches and L.~M. Ezquerro}, {\em {Classes of Finite
  Groups}}, vol.~584 of Math. Appl., Springer Netherlands, 2006.

\bibitem{Doerk1992}
{\sc K.~Doerk and T.~O. Hawkes}, {\em {Finite Soluble Groups}}, vol.~4 of De
  Gruyter Exp. Math., De Gruyter, Berlin, New York, 1992.

\bibitem{Eick2000}
{\sc B.~Eick and C.~R. Wright}, {\em {GAP package FORMAT --- Computing with
  formationsof finite solvable groups, v.1.4.3, 2020}}, 2000.

\bibitem{EICK2002}
\leavevmode\vrule height 2pt depth -1.6pt width 23pt, {\em {Computing Subgroups
  by Exhibition in Finite Solvable Groups}}, J. Symb. Comput., 33 (2002),
  pp.~129--143.

\bibitem{Guo2015}
{\sc W.~Guo}, {\em Structure Theory for Canonical Classes of Finite Groups},
  Springer-Verlag, Berlin, Heidelberg, 2015.

\bibitem{Guo2009a}
{\sc W.~Guo and A.~N. Skiba}, {\em {On Finite Quasi-$\mathcal{F}$-Groups}},
  Comm. Algebra, 37 (2009), pp.~470--481.

\bibitem{Guo2009}
\leavevmode\vrule height 2pt depth -1.6pt width 23pt, {\em On some classes of
  finite quasi-$\mathcal{F}$-groups}, J. Group Theory, 12 (2009), pp.~407--417.

\bibitem{Hawkes1969}
{\sc T.~Hawkes}, {\em {On Formation Subgroups of a Finite Soluble Group}}, J.
  London Math. Soc., s1-44 (1969), pp.~243--250.

\bibitem{Hoefling2000}
{\sc B.~H$\ddot{o}$fling}, {\em {GAP package CRISP --- Computing with Radicals,
  Injectors, Schunck classes and Projectors, v. 1.4.5, 2019}}, 2000.

\bibitem{HOFLING20014}
\leavevmode\vrule height 2pt depth -1.6pt width 23pt, {\em {Computing
  Projectors, Injectors, Residuals and Radicals of Finite Soluble Groups}}, J.
  Symb. Comput., 32 (2001), pp.~499--511.

\bibitem{Huppert1982}
{\sc B.~Huppert and N.~Blackburn}, {\em {Finite Groups II}}, Grundlehren der
  mathematischen Wissenschaften, Springer, Berlin, Heidelberg, 1982.

\bibitem{Kantor1985}
{\sc W.~M. Kantor}, {\em Sylow's theorem in polynomial time}, J. Comput. Syst.
  Sci., 30 (1985), pp.~359--394.

\bibitem{Kantor1990}
\leavevmode\vrule height 2pt depth -1.6pt width 23pt, {\em {Finding Sylow
  normalizers in polynomial time}}, J. Algorithms, 11 (1990), pp.~523--563.

\bibitem{Kantor1990a}
{\sc W.~M. Kantor and E.~M. Luks}, {\em Computing in quotient groups}, in
  Proceedings of the Twenty--Second Annual ACM Symposium on Theory of
  Computing, Baltimore, May 14--16, New York, 1990, ACM, pp.~524--534.

\bibitem{Kegel1978}
{\sc O.~H. Kegel}, {\em {Untergruppenverb\"ande endlicher Gruppen, die den
  Subnormalteilerverband echt enthalten}}, {Arch. Math.}, 30 (1978),
  pp.~225--228.

\bibitem{Monakhov1995}
{\sc V.~S. Monakhov}, {\em {Finite groups with a given set of Schmidt
  subgroups}}, Math. Notes, 58 (1995), pp.~1183--1186.

\bibitem{Monakhov2021}
\leavevmode\vrule height 2pt depth -1.6pt width 23pt, {\em {Three Formations
  over $\mathfrak{U}$}}, Math. Notes, 110 (2021), pp.~339--346.

\bibitem{Monakhov2018}
{\sc V.~S. Monakhov and I.~L. Sokhor}, {\em {On groups with formational
  subnormal Sylow subgroups}}, J. Group Theory, 21 (2018), pp.~273--287.

\bibitem{Murashka2018}
{\sc V.~I. Murashka}, {\em {Finite groups with given sets of
  $\mathfrak{F}$-subnormal subgroups}}, Asian-European J. Math., 13 (2018),
  p.~2050073.

\bibitem{Ronyai1990}
{\sc L.~{R\'o}nyai}, {\em Computing the structure of finite algebras}, J. Symb.
  Comput., 9 (1990), pp.~355--373.

\bibitem{Semenchuk2011}
{\sc V.~N. Semenchuk and S.~N. Shevchuk}, {\em {Characterization of classes of
  finite groups with the use of generalized subnormal Sylow subgroups}}, Math.
  Notes, 89 (2011), pp.~117--120.

\bibitem{Seress2003}
{\sc {\'A}.~Seress}, {\em {Permutation Group Algorithms}}, Cambridge University
  Press, Cambridge, 2003.

\bibitem{Shemetkov1978}
{\sc L.~A. Shemetkov}, {\em Formations of finite groups}, Nauka, Moscow, 1978.
\newblock In Russian.

\bibitem{Vasilev2010}
{\sc A.~F. Vasil'ev, T.~I. Vasil'eva, and V.~N. Tyutyanov}, {\em On the finite
  groups of supersoluble type}, Sib. Math. J., 51 (2010), pp.~1004--1012.

\bibitem{Vasilev2016}
{\sc A.~F. Vasil'ev, T.~I. Vasil'eva, and A.~S. Vegera}, {\em {Finite groups
  with generalized subnormal embedding of Sylow subgroups}}, Sib. Math. J., 57
  (2016), pp.~200--212.

\bibitem{Vasilyev2015}
{\sc V.~A. Vasilyev}, {\em Finite groups with submodular sylow subgroups},
  {Sib. Math. J.}, 56 (2015), pp.~1019--1027.

\bibitem{Zimmermann1989}
{\sc I.~Zimmermann}, {\em Submodular subgroups in finite groups}, {Math. Z.},
  202 (1989), pp.~545--557.

\end{thebibliography}

\end{document}